\documentclass[11pt,a4paper,reqno]{amsart}
\usepackage{graphics}
\usepackage{epsfig}
\usepackage{yfonts}
\usepackage{graphicx}
\usepackage[matrix,arrow,tips,curve,ps]{xy}
\usepackage{enumerate, amssymb}
\usepackage{hyperref}
\usepackage{comment}

%\input{diagrams.tex}

%%%%%%%%%%%
\setlength{\textwidth}{16cm}
\setlength{\oddsidemargin}{0cm}
\setlength{\evensidemargin}{0cm}
\setlength{\topmargin}{0.5in}
\setlength{\textheight}{9in}
%%%%%%%%%%%%%%
%\advance\hoffset 0mm \advance\textwidth50mm

\def\bdi{\begin{diagram}}
\def\edi{\end{diagram}}

\newtheorem{thm}{Theorem}[section]
\newtheorem{cor}[thm]{Corollary}
\newtheorem{lem}[thm]{Lemma}
\newtheorem{prop}[thm]{Proposition}

\theoremstyle{definition}
\newtheorem{defi}[thm]{Definition}
\newtheorem{defis}[thm]{Definitions}
\newtheorem{conj}[thm]{Conjecture}
\newtheorem{quest}[thm]{Question}
\newtheorem{conv}[thm]{Convention}
\newtheorem{nota}[thm]{Notation}
\newtheorem{rem}[thm]{Remark}
\newtheorem{rems}[thm]{Remarks}
\newtheorem{exa}[thm]{Example}
\newtheorem{exas}[thm]{Examples}
\newcommand{\rien}[1]{}

\newcommand{\br}{ \operatorname{{\rm br}}}

\newcommand{\Sing}{ \operatorname{{\rm Sing}}}
\newcommand{\Seg}{ \operatorname{{\rm Seg}}}

\newcommand{\HH}{\mathcal{H}}

\newcommand{\C}{\ensuremath{\mathbb{C}}}
\newcommand{\D}{\ensuremath{\mathbb{D}}}

\newcommand{\Q}{\ensuremath{\mathbb{Q}}}

\newcommand{\cO}{{\ensuremath{\mathcal{O}}}}
\newcommand{\cC}{{\ensuremath{\mathcal{C}}}}
\newcommand{\cH}{{\ensuremath{\mathcal{H}}}}

\newcommand{\Gr}{\rm Gr}

\def\PP{{\mathbb P}}

%\deft{\mathop{\rm cst}}
\def\deg{\mathop{\rm deg}}

\renewcommand{\epsilon}{\varepsilon}
\renewcommand{\phi}{\varphi}

\newcommand{\bnum}{\begin{enumerate}}
\newcommand{\enum}{\end{enumerate}}
\renewcommand{\emptyset}{\varnothing}
\addtolength{\topmargin}{-12mm} \addtolength{\textheight}{1.5cm}
\setlength{\textwidth}{18cm} \addtolength{\oddsidemargin}{-1cm}
\addtolength{\evensidemargin}{-1cm} \raggedbottom

\newcommand{\brem}{\begin{rem}}
\newcommand{\brems}{\begin{rems}}
\newcommand{\erem}{\end{rem}}
\newcommand{\erems}{\end{rems}}
\newcommand{\bexa}{\begin{exa}}
\newcommand{\bexas}{\begin{exas}}
\newcommand{\eexa}{\end{exa}}
\newcommand{\eexas}{\end{exas}}
\newcommand{\bdefi}{\begin{defi}}
\newcommand{\edefi}{\end{defi}}
\newcommand{\bdefis}{\begin{defis}}
\newcommand{\edefis}{\end{defis}}
\newcommand{\bcor}{\begin{cor}}
\newcommand{\ecor}{\end{cor}}
\newcommand{\blem}{\begin{lem}}
\newcommand{\elem}{\end{lem}}
\newcommand{\bconv}{\begin{conv}}
\newcommand{\econv}{\end{conv}}
\newcommand{\bconj}{\begin{conj}}
\newcommand{\econj}{\end{conj}}
\newcommand{\bprop}{\begin{prop}}
\newcommand{\eprop}{\end{prop}}
\newcommand{\bthm}{\begin{thm}}
\newcommand{\ethm}{\end{thm}}
\newcommand{\bnota}{\begin{nota}}
\newcommand{\enota}{\end{nota}}
\newcommand{\bsit}{\begin{sit}}
\newcommand{\esit}{\end{sit}}
\newcommand{\be}{\begin{eqnarray}}
\newcommand{\ee}{\end{eqnarray}}
\newcommand{\bproof}{\begin{proof}}
\newcommand{\eproof}{\end{proof}}
\def\ba{\begin{array}}
\def\ea{\end{array}}

\title[Scrolls and hyperbolicity]{
Scrolls and hyperbolicity}

\author{C.\ Ciliberto,
M.\ Zaidenberg}
\address{Dipartimento di Matematica, Universit\`a degli
Studi di Roma ``Tor Vergata'', Via della Ricerca Scientifica, 00133
Roma, Italy, phone: +39-06-7259-4684, fax: +39-06-7259-4699}
\email{cilibert@mat.uniroma2.it}
\address{Universit\'e Grenoble I, Institut Fourier, UMR 5582
CNRS-UJF, BP 74, 38402 Saint Martin d'H\`eres c\'edex, France,
phone: +33-476-51-4324, fax: +33-476-51-4478}
\email{Mikhail.Zaidenberg@ujf-grenoble.fr}

\thanks{
{\bf Acknowledgements:} This research was done during a visit of
the first author at the Institut Fourier, Grenoble, and of the
second author at the Universit\`a degli Studi di Roma ``Tor
Vergata''.  The second author also profited from a support from the
cooperation program GRIFGA (Groupement de Recherche europ\'een
Italo-Fran\c{c}ais en G\'eom\'etrie Alg\'ebrique). The authors thank
these institutions and the program for the generous support and
excellent working conditions.}

\thanks{
\mbox{\hspace{11pt}}{\it 2010 Mathematics Subject Classification}:
14N25, 14J70, 32J25, 32Q45.\\
\mbox{\hspace{11pt}}{\it Key words}: Kobayashi hyperbolicity,
algebraic hyperbolicity, projective hypersurface, geometric genus}

\date{}
\begin{document}

\begin{abstract} Using degeneration to scrolls, we give an easy proof
of non--existence of curves of low genera on general surfaces  in
$\PP^ 3$ of degree $d\ge 5$. We show, along the same lines,
boundedness of families of
curves of small enough genera on general surfaces in $\PP^ 3$.
We also show that there
exist Kobayashi hyperbolic surfaces in $\PP^ 3$ of degree $d=7$ (a
result so far unknown), and give a new construction of such
surfaces of degree $d=6$. Finally we provide some new lower bounds
for geometric genera of surfaces lying on general hypersurfaces of
degree $3d\ge 15$ in $\PP^4$.
\end{abstract}

\maketitle

\tableofcontents

\vfuzz=2pt
\thanks{}

\section*{Introduction}
What is the lowest geometric genus $\eta(n,d)$ of a reduced,
irreducible curve on a very general hypersurface of degree $d$ in
$\PP^n$? The case $n=2$ is trivial. For $n=3$ one has
\be\label{in0} \qquad \eta(3,d)=0\quad\mbox{if}\quad d\le
4\quad\mbox{while}\quad \eta(3,d)={d-1 \choose 2}-3
\quad\mbox{if}\quad d\ge 5 \,\ee and for any $d\ge 6$ this bound
is achieved by tritangent plane sections, and only by these
\cite{Xu1}. Similarly, $\eta(4,d)=0$ if $d\le 5$, while
$\eta(4,6)\ge 2\,$ \cite{ClR}. More generally, for $n \geq 4$ one
has
\[\,\quad\quad \eta(n,d)=0\quad \forall d\le
2n-3\qquad\mbox{and}\quad \eta(n,d)\ge 1\quad \forall d \geq
2n-2\,
\]
see \cite{Vo1} (in the case $d=2n-3$ see also \cite{Ha, Pac1,
Sh}).  Presumably, $\eta(n,d)\to\infty$ as $d\to\infty$, however,
the asymptotic of $\eta(n,d)$ is unknown. One is equally
interested in bounds for the geometric genus or other numerical
invariants of higher dimensional subvarieties in general
hypersurfaces, see e.g.\ \cite{CLR, Ei1, Ei2, Pac2, Vo2, Wa2,
Xu3}.

A projective variety $X$ is {\em algebraically hyperbolic} if it
does not admit a non--constant morphism from an abelian variety.
If there is an algebraically hyperbolic hypersurface of degree $d$
in $\PP^n$, then a very general hypersurface of degree $d$ is
algebraically hyperbolic as well. For instance, a very general
surface $X$ of degree $d\ge 5$ in $\PP^3$ is algebraically
hyperbolic. Indeed, $X$ does not contain rational or elliptic
curves since $\eta(3,d)\ge 3$ for $d\ge
5$ by \eqref{in0}. This also follows from Proposition \ref {201}
below if $d\ge 6$, while Corollary \ref {agthm} offers a short
proof of Xu's and Voisin's result about non-existence of rational
curves on a very general quintic in $\PP^ 3$. Since $X$ is of
general type it cannot be dominated by an abelian variety.

Similarly, a general sextic threefold $X$ in $\PP^4$ is
algebraically hyperbolic. Indeed, $X$ does not contain rational or
elliptic curves since $\eta(4,6)\ge 2$. By
\cite[Theorem 1]{Xu3} it also does not contain surfaces with
desingularization of geometric genus at most $2$. Therefore, every map
from an abelian variety to $X$ is constant.

A variety $X$  is  \emph{Kobayashi hyperbolic}, or simply
\emph{hyperbolic}, if it does not admit any non--constant entire
curve $\C\to X$. Hyperbolicity implies algebraic hyperbolicity,
and it is stable under small deformations.

Given one of the two above hyperbolicity notions,
one can ask what is the lowest degree $d=d(n)$ such that
a very general projective hypersurface in $\PP^n$ of degree $d$
possesses this property. For instance, the classical \emph{Kobayashi
problem} suggests that a very general hypersurface of degree $d\ge
2n-1$ in $\PP^n$ is hyperbolic.

It is known that, indeed, a very general surface of degree $d\ge
18$ in $\PP^3$ is hyperbolic \cite{Pau} (see also \cite{DEG, MQ}).
The existence of hyperbolic surfaces in $\PP^3$ of degree $d$ for
all $d\ge 8$ was established with a degeneration argument in
\cite{SZ2} (see the references in \cite{SZ2} for other
constructions), and for $d=6$ in \cite{Du}. In \S \ref
{ssec:d=67} below (see, in particular, Theorem \ref {mthm}) we
give an alternative proof for the case $d=6$, which works also in
the (so far unknown) case $d=7$. The case $d=5$ in the Kobayashi
problem for $\PP^3$ remains open.

Our method consist in degenerating a general hypersurface to a
certain special one, following the limits in the degeneration of
entire curves or of algebraic curves or surfaces, according to the
hyperbolicity notion we are dealing with. In this framework the
concept of Brody curves and their limits is very useful, cf.\ e.g.
\cite{SZ1, SZ2, Za1, Za2}. We recall a minimum of basics on this
subject in \S \ref {sec:brody}. Our preferable degenerations here
are to \emph{scrolls}, and we recall their main properties in \S
\ref {sec:scrolls}. In subsection \ref {ssec:ah} we give an easy
proof of non--existence of curves of low genera on very general
surfaces in $\PP^ 3$ of a given degree. In \S \ref {sec:higher} we
treat the higher dimensional case. In particular, in Theorem \ref
{thm:pg} we provide a lower bound for the geometric genus of
surfaces contained in very general hypersurfaces of degree $3d\ge
15$ in $\PP^ 4$.

 By a well-known theorem of Bogomolov  \cite{Bo}, on a smooth
surface $S$ of general type with $c^2_1 (S) > c_2(S)$, the curves
of a fixed geometric genus vary in a bounded family. This was
partially extended in \cite{LM} to any smooth surface $S$ of
general type by showing that there are only a finite number of
rational and elliptic curves on $S$ with a fixed number of nodes
and ordinary triple points and no other singularities.  In
subsection \ref {ssec:bound} we address the question whether
curves of a given geometric genus have bounded degree on a general
surface of degree $d\ge 5$ in $\PP^ 3$. We give an affirmative
answer for all genera $g\le d^ 2+O(d)$.

Finally in subsection \ref {ssec:d=67} we prove the aforementioned
Theorem \ref {mthm}.

\section{Scrolls}\label{sec:scrolls}

 \subsection{Generalities on scrolls}\label{ssec:general}
By a \emph{scroll} in $\PP^n$ we mean the image $\Sigma=\phi(S)$
of a smooth, proper $\PP^1$-bundle $\pi:S\to E$ under a birational
morphism $\phi:S\to \Sigma\hookrightarrow\PP^n$ which sends the
\emph{rulings} of $S$ (i.e. the fibres of $\pi$) to projective
lines, called \emph{rulings} of $\Sigma$. The variety $E$ is
called the \emph{base} of the scroll. We will denote by $H$ and
$F$  a hyperplane section and a ruling of $\Sigma$, respectively.
We may abuse notation denoting by $H$ and $F$ also their proper
transforms on $S$.

The induced morphism $\mu: E \to \Gr(1,n)$ to the Grassmanian of
lines in $\PP^n$ is birational onto its image. Any such morphism
$\mu$ appears in this way, where $\pi:S\to E$ is induced via $\mu$
by the tautological $\PP^1$--bundle over the Grassmanian.
Furthermore, $d=\deg(\Sigma)$ is equal to the degree of the
subvariety $\mu(E)$ under the Pl\"ucker embedding of the
Grassmanian \cite[12.4]{BCGMB}, \cite[11.4.1]{Do}.

We will suppose form now on that $\phi: S\to \Sigma$ coincides
with the normalization morphism. We denote by $\br(\Sigma)$ the
set of \emph{multibranch points} of $\Sigma$, i.e. the set of
points $x\in \Sigma$ such that $\phi^ {-1}(x)$ consists of more
than one point. If $x\not\in \br(\Sigma)$, e.g. $x$ is a smooth
point of $\Sigma$, then there is just one ruling passing through
$x$. Since $\phi$ is finite,  there is no point on $\Sigma$ which
belongs to infinitely many rulings.

We let $\Delta_\Sigma=\overline{\br
(\Sigma)}\subseteq\Sigma$ and
$\Delta_S=\phi^{-1}(\Delta_\Sigma)\subseteq S$.
We will assume that the following conditions hold:
\begin{itemize}
\item [(C1)] ~ $\dim(\Sigma)=n-1$;
\item [(C2)] ~ $\Delta_\Sigma$ coincides with $\Sing (\Sigma)$;
\item [(C3)] ~ $\Delta_\Sigma$  and $\Delta_S$
are both irreducible of dimension
$n-2$;
\item [(C4)] ~  a general point  $x\in \Delta_\Sigma$
is a normal crossing double point of $\Sigma$. In particular,
$\phi^ {-1}(x)$ has cardinality 2, and $x$ sits on two different
rulings;
\item [(C5)] ~ $\Delta_\Sigma$ contains no ruling, i.e.
$\mu: E \to \Gr(1,n)$
is injective.
\end{itemize}
In this situation $\Delta_\Sigma$ and $\Delta_S$ both have natural
scheme structures, and $\Delta_S$  is a reduced divisor on $S$.

Conditions (C1)--(C4) are verified if $S\subseteq\PP^{n+k}$ is a
smooth scroll of dimension $n-1$ and $\phi:S\to\Sigma$ is induced
by a general linear projection $\PP^{n+k}\dashrightarrow\PP^{n}$;
see \cite {Fr}. The last condition (C5) can be easily checked by
induction; we leave the details to the reader.

\blem\label{199} In the above setting, a general ruling of
$\Sigma$ meets the double locus $\Delta_\Sigma$ in $d-n+1$ points.
In particular, $\Sigma$ is swept out by an $(n-2)$--dimensional
family of $(d-n+1)$--secant lines of $\Delta_\Sigma$.\elem

\bproof  By the Ramification Formula \cite[9.3.7(b)]{Fu}
there is a linear equivalence relation on $S$
 \be\label{f201} \Delta_S \sim
(d-n-1)H-K_S\,.\ee Since $F\cdot H=1$ and $F\cdot K_S=-2$, we have
$F\cdot\Delta_S=d-n+1$. Since $\Delta_S$ is reduced, the general
ruling of $S$ meets $\Delta_S$ in $d-n+1$ distinct points.
The assertions follow because $\phi$ induces an isomorphism of
 each ruling of $S$ to its image.
\eproof

%%%%%%%%%%%
\subsection{Surface scrolls with ordinary singularities}
We restrict here to the case $n=3$. So $E$ is a smooth curve
of genus $g$ and $S\subseteq\PP^{3+k}$ and
$\Sigma\subseteq\PP^3$ are surfaces, called \emph{scrolls of
genus} $g$: here $g$ is  the \emph{sectional genus}
of the scroll.

\begin{rem}\label{rem:genus}
 For an irreducible curve $C$ on $S$ of genus $g'$ such that
$C\cdot F=\nu$, the Riemann--Hurwitz Formula implies the
inequalities $g'\ge \nu(g-1)+1\ge g$. In particular, for $g\ge 1$
the only irreducible curves on $S$ of geometric genus $g'<g$ are
the rulings, and for $g'=g\ge 2$ the curve $C$ is a
\emph{unisecant} i.e., the intersection number $\nu$ of $C$ with
rulings is 1. The same holds on $\Sigma$.
\end{rem}

 We say that $\Sigma$ has \emph{ordinary singularities} if, in
addition to conditions (C1)--(C5), the following hold:
\begin{itemize}
\item [(C6)]  the singularities of the \emph{double curve} $\Delta_\Sigma$
consist of finitely many triple points, which are  also ordinary
triple points of the surface $\Sigma$ (these are locally
analytically isomorphic to the surface singularity $xyz=0$ in
$\mathbb C^ 3$ at the origin);
\item [(C7)]  the non--normal crossings  singularities of $\Sigma$
are finitely many \emph{pinch points}. These are the points in
$\Delta_\Sigma\setminus\br(\Sigma)$, and there is just one ruling
through each of them. A pinch point has just one preimage on $S$,
which, abusing terminology, we will also call a pinch point;
\item [(C8)] the only
singularities of $\Delta_S$ are ordinary double points, three of
them over each triple point of $\Delta_\Sigma$. Furthermore, the
degree two map $\phi: \Delta_S\to \Delta_\Sigma$ is ramified
exactly over the pinch points of $\Sigma$.
\end{itemize}
These are the singularities of a general projection to $\PP^3$ of
a smooth surface in $\PP^ 4$, or even of a surface in $\PP^ 4$
with finitely many \emph{nodes}, i.e. double points with tangent
cone formed by two planes spanning $\PP^ 4$.
In this case the curves $\Delta_\Sigma$ and
$\Delta_S$ are irreducible, except for the projection in $\PP^ 3$
of the Veronese surface of degree $4$ in $\PP^ 5$ (cf.\ \cite
{Fr1, Fr2, MP, Mo}). Note that a
general projection to $\PP^4$ of any smooth surface in $\PP^{r}$ (with $r>4$)
has only nodes as singularities
and the Veronese surface of degree $4$ in $\PP^ 5$ is the only one
whose general projection to $\PP^ 4$ is smooth (see \cite {Sev, Zak1}).

The basic invariants of $S$ are
\[c_1^2=K_S^2=8(1-g),\quad c_2=e(S)=4(1-g),\quad\mbox{and}\quad
\chi(\cO_S)=\frac{c_1^2+c_2}{12}=1-g\] (see \cite{En}, \cite [Ch.
5, \S 2]{Ha}).  The following \emph{projective invariants} are
also important \be\label{401}
\begin{aligned}
\delta_\Sigma\quad=&\quad \deg(\Delta_\Sigma)\cr
\gamma_\Sigma\quad=&\quad \text{the geometric genus of}\quad
\Delta_\Sigma\cr t_\Sigma\quad=&\quad\text{the number  of triple
points of} \quad\Delta_\Sigma\cr p_\Sigma\quad=&\quad\text {the
number of pinch points of} \quad\Sigma\cr \tilde
\gamma_\Sigma\quad=&\quad \text{the geometric genus of}\quad
\Delta_S\cr
\end{aligned}
\ee (in the sequel we suppress the index $\Sigma$ when unnecessary).

For the proof of the following formulas see e.g.\ \cite{Bo},
\cite[\S 11.5]{Do}, \cite[p.\ 176]{En}, \cite{Pi},
\cite[(1)-(10)]{SZ0}, and references therein.

\bprop\label{300} Let $\Sigma$ stands as before for a scroll in
$\PP^3$ of degree $d$ and genus $g$ with ordinary singularities.
Then the projective invariants of $\Sigma$ are given by the {\rm
Bonnesen's formulas}

 \be\label{f100}
\delta={d-1\choose 2}-g\,,\ee

\be\label{f200}\gamma={d-3\choose 2}+(d-5)g\, ,
\ee

\be\label{f300} t={d-2\choose 3}-(d-4)g\,,\ee

\be\label{f400} p=2d+4(g-1)\,,\ee

\be\label{f500}
\tilde \gamma=2(\gamma+g)+d-3\,.\ee
\eprop

\brem\label{001} Due to (\ref{f300}),  for $d\ge 5$ the inequality
$t\ge 0$ reads $g\le\frac{1}{6}(d-2)(d-3)\,$. This implies
\be\label{f600} g\le d-4, \quad\text{ if}\quad d=5,6,7\,.\ee In
the sequel we also need the inequality \be\label{002} \gamma >
3(g-1)\qquad\text {for all}\quad g\ge 1\quad\text{and}\quad  d\ge
5\,.\ee This follows from (\ref{f200}) for $d\ge 8$ and from
(\ref{f200}) and (\ref{f600}) for $d=5,6,7$ (actually, $\gamma
>3g$ for all $g\ge 1$ and $d\ge 5$ except for $g=2$, $d=6$). \erem

\subsection{Surface scrolls with general moduli}
We recall a result from \cite{APS} (cf. also
\cite[Theorem 1.2]{CCFMLincei}).

\begin{thm}\label{thm:scrolls}
Let $g \geq 0$ be an integer and let $k = {\rm min}\{1, g-1\}$. If
$ d \geq 2 g + 3 + k$, then there exists a unique irreducible
component $\HH_{d,g}$ of the Hilbert scheme of scrolls of degree
$d$ and sectional genus $g$ in $\PP^{r}$, where
$r=d-2g+1$,
such that the general point $[S] \in
\HH_{d,g}$ represents a smooth scroll $S$ with $h^ 1(S, \mathcal
O_S(1))=0$, i.e. $S$ is \emph{non--special}. Furthermore
$\HH_{d,g}$ dominates the moduli space ${\mathcal M}_g$ of smooth
curves of genus $g$ via the map sending a scroll to its base.
\end{thm}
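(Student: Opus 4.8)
The plan is to construct the claimed component $\HH_{d,g}$ explicitly via projective bundles over curves, and then verify the asserted properties (irreducibility, uniqueness, smoothness of the general member, non--speciality, and dominance onto $\cM_g$) by a parameter count together with a vanishing argument. First I would fix a smooth curve $E$ of genus $g$ and recall that a scroll $S$ with base $E$ is the image of $\PP(\cV)$ for a rank--two vector bundle $\cV$ on $E$, embedded by a suitable linear system on the tautological line bundle. The degree $d$ and the requirement $\dim(\text{image})=n-1$ together with $r=d-2g+1$ pin down the numerical type of $\cV$: one computes $\deg(\cV)$ from $d$ and $g$ via the standard intersection theory on a ruled surface (using $H^2=d$, $H\cdot F=1$, $F^2=0$, and $K_S\cdot F=-2$ as in Lemma~\ref{199}), so that sections of $\cO_S(1)$ have the correct dimension $r+1=d-2g+2$.

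Next I would show that for $d\ge 2g+3+k$ the general such $\cV$ is sufficiently positive that $\cO_S(1)$ is very ample and $h^1(S,\cO_S(1))=0$. The key inputs are the Leray spectral sequence for $\pi:S\to E$, which reduces the cohomology of $\cO_S(1)$ to cohomology on $E$ of $\cV$ twisted by line bundles, and a genericity statement: a general stable (or suitably balanced) bundle $\cV$ of the relevant degree has vanishing $h^1$ once the degree exceeds the bound $2g+1$ coming from Riemann--Roch on $E$. The hypothesis $d\ge 2g+3+k$ is precisely what is needed to push $\deg\cV$ past this threshold, which explains the appearance of the correction term $k=\min\{1,g-1\}$ distinguishing the low--genus cases $g=0,1$ from $g\ge 2$. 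Very ampleness then follows from a standard separation--of--points and --tangents argument for the complete linear system, again reduced to positivity on $E$.

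For irreducibility and uniqueness of the component, the strategy is to fibre the parameter space over $\cM_g$: the moduli of pairs $(E,\cV)$ with $\cV$ of the fixed numerical type forms an irreducible family, since the moduli stack of semistable bundles of fixed rank and degree on a fixed curve is irreducible and $\cM_g$ itself is irreducible. The embedded scrolls in $\PP^r$ of this type are then obtained by choosing a basis of $H^0(S,\cO_S(1))$ up to $\PGL_{r+1}$, so the locus they sweep out in the Hilbert scheme is the image of an irreducible variety, hence irreducible; its closure is the desired $\HH_{d,g}$. Uniqueness follows because any component whose general member is a smooth non--special scroll of this degree and genus must arise from this construction — the bundle $\cV$ and curve $E$ are recovered from $S$ as the base of the ruling and the pushforward of $\cO_S(1)$ — so no second such component can exist. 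Dominance onto $\cM_g$ is then immediate from the construction, the map sending $[S]$ to $[E]$ being the one induced by $\pi$.

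I expect the main obstacle to be the dimension count confirming that this locus is genuinely a full component of the Hilbert scheme rather than a proper subvariety of a larger one, i.e.\ that the general scroll so constructed is unobstructed with $H^1$ of its normal bundle controlled as expected. This is where the non--speciality $h^1(S,\cO_S(1))=0$ does the essential work: it ensures the expected dimension of the Hilbert scheme at $[S]$ equals the actual one, so the parametrized family fills out a component. Establishing this unobstructedness — equivalently, computing $h^1(S,N_{S/\PP^r})$ and matching it against the dimension of the constructed family — is the technical heart, and it is precisely the point at which I would invoke the cited results of \cite{APS} rather than reproving the normal bundle vanishing from scratch.
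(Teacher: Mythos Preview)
The paper does not prove this theorem at all: it is introduced by the sentence ``We recall a result from \cite{APS} (cf.\ also \cite[Theorem 1.2]{CCFMLincei})'' and is simply quoted from the literature, with no argument given. So there is no proof in the paper to compare your proposal against.

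Your outline is a reasonable sketch of the standard approach to such results (construct $\PP(\cV)$ over a moving curve, use Riemann--Roch and Leray to get non--speciality and very ampleness, then do a parameter count to identify a Hilbert scheme component), and indeed this is broadly the shape of the arguments in \cite{APS} and \cite{CCFMLincei}. However, note that your final paragraph is circular: you propose to handle the unobstructedness step by invoking \cite{APS}, but \cite{APS} \emph{is} the source of the theorem you are trying to prove. If you want to give an independent argument you must actually carry out the normal bundle computation (or the equivalent deformation--theoretic dimension count) rather than defer to the reference whose content is the statement itself. If instead you are content to cite the result, then the entire proposal collapses to the paper's own treatment: quote \cite{APS} and move on.
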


\begin{rems}\label{rem:3space}  (i) Assuming that $ d \geq 2 g + 3 + k$
(as in the above theorem), we have
$r\ge 3$ if $g=0$, $r\ge 4$ if $g=1$, and $r \ge 5$ if $g\ge 2$,
and we can project smooth scrolls $S$ with
$[S] \in \HH_{d,g}$ thus obtaining scrolls $\Sigma$  in $\PP^ 3$
with ordinary singularities and irreducible double curve.

(ii) The assumption of Theorem \ref{thm:scrolls} gives $d\ge 2g+4$
for $g\ge 2$ and $d\ge 2g+3=5$ for $g=1$. In fact, similar results
hold also for $g\ge 2$ and $d=2g+3$ or $d=2g+2$, while the
corresponding scrolls are no longer smooth.

More precisely, let $g\ge 2$ and $d=2g+3$ (i.e., $r=4$). Then
$\HH_{d,g}$ is a component of the Hilbert scheme, whose general
point $[S'] \in \HH_{d,g}$ represents a scroll $S'\subseteq \PP^
4$ with only nodes as singularities and
with a smooth normalization $S$ such that $h^ 0(S, \mathcal
O_S(1))=5$ and  $h^ 1(S, \mathcal O_S(1))=0$
(this can be shown with
the same analysis as in \cite{CCFMLincei}). Once again,
$\HH_{d,g}$ dominates the moduli space ${\mathcal M}_g$.

If  $g\ge 2$ and $d=2g+2$ (i.e., $r=3$), a similar assertion
holds. However, now $\HH_{d,g}$ is no longer a component of the
Hilbert scheme, but a locally closed subset of the projective
space $\mathcal L_d=\vert \mathcal O_{\PP^ 3}(d)\vert$ of all
surfaces of degree $d$ in $\PP^3$. It is reasonable to expect that
a general point $[\Sigma] \in \HH_{d,g}$ represents a scroll
$\Sigma\subseteq \PP^ 3$ with ordinary singularities. This would
follow by going deeper into the analysis performed in
\cite{CCFMLincei}, but we do not use this here in the full
generality. We investigate below in more detail various examples
(see especially Example \ref{sextic}).
\end{rems}

\bexa\label{quartic} {\em Elliptic quartic scrolls.} Let $E$ be
a smooth curve of type $(a,b)$ on $\PP^1\times\PP^1$,  identified
with a smooth quadric in $\PP^ 3$. The genus of $E$ is
$g=ab-a-b+1$. Consider a
pair of skew lines $R_1, R_2$ in $\PP^ 3$. Identifying these lines
with the factors of $\PP^1\times\PP^1$, we can interpret the
canonical projections of $E$ to the factors as maps $\phi_i: E\to
R_i$, $i=1,2$, of degree $a$ and $b$, respectively. For each $x\in
E$ we consider the line $L_x$ joining  the points
$\phi_i(x)$, $i=1,2$. This yields the map $\mu: x\in E\to L_x\in \Gr(1,3)$.
Its image is a smooth curve on $\Gr(1,3)$
under the Pl\"ucker embedding of the Grassmanian
 $\Gr(1,3)$ as a quadric in $\PP^5$. The associated scroll
\[\Sigma=\Sigma_{a,b}=\bigcup_{x\in E} L_x\]
in $\PP^3$ with base $E$ has degree $a+b$. Indeed, it has
singularities of multiplicities $a$ along $R_1$ and $b$ along
$R_2$. So a line $\langle A,B\rangle$, where $A\in R_1$ and $B\in
R_2$, meets $\Sigma$ only in $A$ and $B$.

In particular, for $a=b=2$ we obtain a quartic scroll in $\PP^3$
of genus $1$ with two skew double lines, and for $a=3,\, b=2$ a
quintic scroll of genus $2$ with a double line and a triple line.

From now on, we concentrate on an elliptic quartic scroll
$\Sigma=\Sigma_{2,2}$. The preimage $\Delta_S$ of $\Delta_\Sigma$
on $S$ consists of two disjoint copies $E_1,E_2$ of $E$ with
$\phi_i: E_i\to R_i$, $i=1,2$, corresponding to two distinct
$g_2^1$'s on $E$. There are in total 8 pinch points of
$\Sigma$, 4 on each of the lines $R_1, R_2$. These are the branch
points of the maps $\phi_i$, $i=1,2$. If these maps are
sufficiently general, also the pinch points are generically
located along $R_1, R_2$ and the ruling passing through a pinch
point does not contain any other pinch point.

Let us illustrate on this example our degeneration method. Any
smooth elliptic quartic curve is a complete intersection of two
quadrics in $\PP^ 3$. Hence it embeds as well to the Grassmanian
$\Gr(1,3)$. By virtue of Remark \ref{rem:3space} to Theorem \ref
{thm:scrolls} (the case $r=3$) these curves fill in a unique
irreducible component $\mathcal H_{4,1}$ of the Hilbert scheme of
curves of degree $4$ in $\Gr(1,3)$, which dominates the moduli
space $\mathcal M_1$. The component ${\mathcal H_{4,1}}$ contains
all \emph{limit curves}, e.g. all reduced, nodal curves of degree
4 and arithmetic genus 1 spanning a $\PP^ 3$. For instance, the
union $E_0$ of two conics $\Gamma_1,\Gamma_2$ meeting
transversally at two distinct points $f_1,f_2$ is such a limit
curve. The curve $E_0$ corresponds to the union $\Sigma_0$ of two
quadrics surfaces $Q_1,Q_2$ in $\PP^3$ associated to the conics
$\Gamma_1, \Gamma_2$ on the Grassmanian $\Gr(1,3)$. We may assume
these quadrics to be smooth. They intersect along the
quadrilateral $F_1\cup F_2\cup G_1\cup G_2$, where the lines $F_1,
F_2$ correspond to $f_1, f_2$ and belong to the same ruling on
each quadric, and $G_1, G_2$ are distinct lines belonging to the
other ruling. We let $p_{ij}=F_i\cap G_j$, $i,j=1,2$.

The surface  $\Sigma_0$ can be seen as a flat limit of surfaces of
type $\Sigma$, since it corresponds to a point in
${\mathcal H_{4,1}}$. The limit of the ruling of
$\Sigma$ is the union of the two rulings of $Q_1$ and $Q_2$
containing $F_1,F_2$. The limits of the double lines $R_1, R_2$
are the lines $G_1,G_2$. The limit of each of the components $E_i$
of the curve $\Delta_S$ on $S$ consists of two copies of $G_i$
glued at $p_{1i}, p_{2,i}$. Each of these points is the limit of
two pinch points of $\Sigma$.

Conversely, when we deform $\Sigma_0$ to $\Sigma$, the two double
lines $F_1$ and $F_2$ of $\Sigma_0$ disappear, because we are
smoothing the two nodes of $E_0$. Each of the points $p_{ij}$
($i,j=1,2$) gives rise to two pinch points generically located
along the double line of $\Sigma$, which deforms $G_j$. \eexa

\bexa\label{quintic} {\em Elliptic quintic scrolls.} Consider now
the case where $d=5$ and $ g=1$. By Theorem \ref{thm:scrolls}, a
general point $[S] \in \HH_{5,1}$ represents a smooth scroll $S$
in $\PP^ 4$, whose general projection $\Sigma$ to $\PP^ 3$ has
ordinary singularities. According to Bonnesen's formulas
(\ref{f100})-(\ref{f500}), the double curve $C=\Delta_\Sigma$ is
an irreducible, smooth, elliptic quintic curve, which contains the
$10$ pinch points of $S$. Its preimage $\tilde C=\Delta_S$ is a
smooth, irreducible curve on $S$ of genus $6$. By Lemma \ref
{199}, the rulings of $\Sigma$ are trisecant lines to $C$.

Conversely, for any smooth elliptic quintic curve $C$ in $\PP^3$,
the trisecant lines to $C$ sweep out a quintic scroll $\Sigma$,
which is singular exactly along $C$ (cf.\  Berzolari's Formula,
Proposition 1 and Corollary 2 in \cite{MBer}). Such a surface
$\Sigma$ is an elliptic scroll, and by the Riemann--Roch Theorem
it comes as a projection of a surface represented by a point in
$\HH_{5,1}$ as above.

Any such scroll $\Sigma$ corresponds to an embedding of an
elliptic quintic  curve $E$ in $\Gr(1,3)$ via the map $\mu$ as in
\S \ref {ssec:general}. The image of $E$ is a quintic  elliptic
normal curve, contained in a hyperplane section of $\Gr(1,3)$.
Indeed, any normal, elliptic quintic  curve lies on some smooth
quadric in $\PP^ 4$, hence on a hyperplane section of
$\Gr(1,3)$.

There is another interpretation of these elliptic quintic scrolls.
Let $E$ be an elliptic curve. Consider its symmetric product
$E(2)$, formed by all degree 2 effective divisors on $E$. The
class of the \emph{diagonal} $D=\{2p, p\in E\}$ is divisible by
$2$ in ${\rm Pic}(E(2))$; we denote  by $\vartheta$ the class of
its half. One has $K_{E(2)}\sim -\vartheta$.

The Abel--Jacobi map $\alpha: E(2)\to {\rm Pic}^ {(2)}(E)\cong E$
makes $E(2)$ a $\PP^ 1$--bundle with base $E$. The rulings are the
$g_2^ 1$'s on $E$. The \emph{coordinate curves} $E_p=\{x+p, x\in
E\}\cong E$ are unisecant curves of the rulings and form a
one--dimensional family parametrized by the point  $p$ varying on
$E$. We have $E_p^ 2=1$. If $F_1, F_2$ are rulings, then the
divisor class of the curve $E_p+F_1+F_2$ is very ample on $E(2)$
and maps isomorphically the surface $E(2)$ onto a quintic scroll
$S$ in $\PP^ 4$.  Each coordinate curve $E_p$ is  mapped to a
smooth plane cubic on $S$ which is the residual intersection of
$S$ with a hyperplane containing two rulings. Conversely any
smooth plane cubic on $S$ is a coordinate curve: indeed, it sits
on a 1--dimensional family of hyperplane sections of $S$ and their
residual intersections with $S$  is a pair of lines.

Let as before  $\Sigma$ denote the image of $S$ under a general
projection $\PP^4\dashrightarrow\PP^ 3$. Any coordinate curve on $S$
is isomorphically mapped to a smooth plane cubic and
the images on $\Sigma$ of two distinct
coordinate cubics on $S$ are distinct. This provides a complete,
one--parameter family of smooth plane cubic curves on $\Sigma$
which are the only plane cubics on $\Sigma$.
Let $L$ be the plane containing one of them $\bar E$.
The residual intersection  on
$L\cap\Sigma$ must be a union of two rulings, which meet on $C$.
The corresponding rulings on $S$ span a hyperplane
which cuts out on $S$ a coordinate cubic $\tilde E$
plus the two rulings. Hence $\bar E$ is the image of $\tilde
E$ on $\Sigma$.

Let $x\in C$ be a general point and $F_1,F_2$ the two
rulings through $x$. The plane $\pi$ spanned by them cuts
 $\Sigma$ in the union of $F_1,F_2$ and a smooth cubic $\bar E$,
 which is the projection of a unique coordinate curve. When $x$ varies,
 we obtain in this way all projections of coordinate curves.
 This shows that $C$ is isomorphic to $E$, since it parametrizes
 the family of coordinate curves.

 When the center of projection $\PP^4\dashrightarrow\PP^ 3$
varies we obtain a monodromy action. The following argument shows
that this monodromy is irreducible on appropriately chosen
objects.

 The cubic curve $\bar E$ as above does not pass through
$x$, and cuts the ruling $F_i$ in three (generically distinct)
points $p_i, q_{i1},q_{i2}$, $i=1,2$, such that $q_{i1},q_{i2}\in
C$. Indeed, $p, q_{i1},q_{i2}$, $i=1,2$, are the five intersection
points of $\pi$ with $C$. By moving the centre of projection, we
may assume that the pair of rulings $(F_1,F_2)$ corresponds to a
general divisor of a given $g^ 1_2$ on $E$, and that
$q_{11}+q_{12}$ ($q_{21}+q_{22}$, respectively) is a general
divisor in the $g^ 1_2$ cut out on $\bar E$ by the lines through
$p_1$ (through $p_2$, respectively). In conclusion, by moving the
centre of projection the monodromy interchanges the pairs
$q_{11}+q_{12}$ and $q_{21}+q_{22}$ and also interchanges the
points in each pair separately.
  \eexa

 \bexa\label{sextic} {\em Sextic scroll of genus two.}
 By the case $g\ge 2,\,r=3$ of Remark \ref {rem:3space}.2,
 there exist sextic scrolls $\Sigma$
 of genus two in  $\PP^ 3$.
 They correspond to genus 2 curves of degree 6
 on the Grassmanian $\Gr(1,3)$. In fact,
 by the Riemann--Roch Theorem,  any smooth curve of genus 2
 embeds in  $\PP^ 4$ as a sextic. This sextic spans $\PP^ 4$
 and lies on
 a smooth quadric in $\PP^ 4$, hence on a hyperplane section of
 the Grassmanian $\Gr(1,3)$.
 These curves fill in a unique component $\mathcal H_{6,2}$
 of the Hilbert
 scheme of curves of degree $6$ and genus 2 in $\Gr(1,3)$,
 which dominates
 $\mathcal M_2$ via the natural map.
 As in Example \ref {quartic}.2,
 ${\mathcal H_{6,2}}$ contains
{limit curves}, and in particular all reduced, nodal curves of
degree 6 and arithmetic genus 2 spanning a $\PP^ 4$.

Assuming that a general such scroll $\Sigma$ has ordinary
singularities, Lemma \ref {199} and Proposition \ref {300} say
that the rulings of $\Sigma$ are four--secant lines to the double
curve $C=\Delta_\Sigma$, which is a smooth, irreducible curve  in
$\PP^3$ of degree $8$ and genus $5$, passing through all $16$
pinch points of $\Sigma$. The preimage $\tilde C=\Delta_S$ of $C$
on $S$ is a smooth curve of degree $16$ and of genus $17$.

Let us show that a general sextic scroll $\Sigma$ in $\PP^3$ of
genus 2 has ordinary singularities and an irreducible double curve
$C$. Consider a reducible sextic curve $E_0\subseteq \PP^ 4$ of
arithmetic genus $2$, which consists of a general smooth elliptic
normal quintic curve $E'$ and a line $D$ meeting $E'$
transversally in two distinct points. Such a curve $E_0$
corresponds to a point in ${\mathcal H_{6,2}}$, hence to a
reducible surface $\Sigma_0$, which is a limit of genus 2 sextic
scrolls $\Sigma$. On the other hand, $\Sigma_0$ is the union of a
general quintic elliptic scroll $\Sigma'$ in $\PP^3$ arising from
$E'$ as in Example \ref {quintic} plus a plane $\pi$ through the
two rulings $F_1,F_2$ of $\Sigma_0$, which correspond to the
intersection points of $E'$ and $D$. These rulings meet at a point
$p$ of the double curve $C'$ of $\Sigma'$.  The ruling on $\pi$ is
given by the pencil of lines passing through $p$, which
corresponds to the line $D$. The plane $\pi$ cuts out on $\Sigma'$
the union of the rulings $F_1$, $F_2$ and a smooth plane cubic
$\bar E$, as described in Example \ref {quintic}. The
singularities of $\Sigma'$ consist of $C'$, $F_1$, $F_2$, and
$\bar E$.

When we deform $E_0$ to a general smooth sextic $E$ on $\Gr(1,3)$,
the scroll $\Sigma_0$ is deformed to an irreducible sextic  scroll
$\Sigma$. The double lines $F_1$ and $F_2$ of $\Sigma_0$
disappear, because we are smoothing the two nodes of $E_0$. This
means that the flat limit on $\Sigma_0$ of the singular locus of
$\Sigma$ is the nodal curve $C_0=C'\cup\bar E$ of arithmetic
genus $5$. Hence $\Sigma$ is singular only along a double curve
$C$, which has arithmetic genus $5$. The latter curve is
irreducible. Indeed, otherwise this would be still a union of the
form $C'\cup\bar E$, and so the four-secant lines to $C$ would
sweep out a union of an elliptic scroll and a plane. However, this
is impossible since $\Sigma$ is irreducible and swept out by the
four-secants of the double curve $C=\Delta_\Sigma$.

 Since $C_0$ is nodal so is
 $C$. We claim that $C$ is actually smooth. Indeed, we may
 restrict our family to a general irreducible curve germ in
 ${\cH_{2,6}}$
 through
$\Sigma_0$, and then normalize this germ. In this way we obtain a
family of sextic scrolls over the disc $\D$ with a family
$\cC\to\D$ of double curves. The central fibre of $\cC$ is a
reducible nodal curve $C_0=C'\cup\bar E$ with $4$ nodes. Assuming
that no one of these nodes is smoothed on a general fibre $C$ of
the family, $C$ should also have $4$ nodes. These nodes represent
an \'etale four-sheeted cover over the disc. Now we can normalize
the fibres of the family $\cC$ simultaneously (see e.g.,
\cite{Se}), thus obtaining a smooth family with an irreducible
general fibre and a disconnected cental fibre. The latter
contradicts the \emph{Connectedness Principle} (see \cite [Ch.
III, Ex.\ 11.4, p.\ 281] {har}).

Consequently,
 at least one of the four nodes of $C_0$ has to be smoothed
 in the deformation to $C$.
 But then by the irreducibility of the monodromy
 (see the final part of Example \ref {quintic}) all nodes of
 $C_0$ have to be smoothed. \eexa

\section{Bounding degrees of low genera curves on surfaces}
\subsection{Algebraic hyperbolicity} \label{ssec:ah} Scrolls can be
used to establish algebraic hyperbolicity of very general surfaces
of a given  degree $d$ in $\PP^3$. For $d\ge 6$ this is done in
Proposition \ref{201} below. In the proof we use the Albanese
inequality (see \cite {Alb, No} (see also \cite[\S
4(b)]{Morr}), which says the following: if a reduced projective
curve $C$ of geometric genus $g$ degenerates into an effective
cycle $C_0=\sum_i m_iC_i$, where $C_i$ is a reduced projective
curve of geometric genus $g_i$, then \be\label{alb} g\ge
\sum_{g_i\ge 1} (m_i(g_i-1)+1)\,.\ee In particular, $m_i(g_i-1)\le
g-1$ if $g_i\ge 1$. So $g_i\le g$  for all $i$.

\bprop\label{201} Assume that there exists a scroll $\Sigma$ of
degree $d\ge 5$ and genus $g\ge 1$ in $\PP^3$ with ordinary
singularities. Then a very general surface $X$ in $\PP^3$ of
degree $d$ does not contain curves of geometric genus $g'<g$.
\eprop

\bproof Let $X$ be a very general surface  in $\PP^3$ of degree
$d$. By the Noether-Lefschetz Theorem, the Picard group of $X$ is
generated by $\mathcal O_X(1)$. Consider the pencil $\{X_t\}_{t\in
\PP^ 1}$ generated by $X_0=\Sigma$ and $X_\infty=X$. This gives
rise to a flat family of surfaces $f: \mathcal X\to \mathbb D$
over  a disc $\mathbb D$, where the {central fibre} over $0$ is
$X_0$,  all fibres $X_t$ with $t\in\D\setminus\{0\}$ are smooth
and ${\rm Pic}(X_t)$ is generated by $\mathcal O_{X_t}(1)$ for a
very general such fibre.  We claim that a very general surface of
this family does not contain any curve of geometric genus $g'<g$.
We argue by contradiction and assume that this is not the case for
some $g'< g$.

For each positive integer $n$ we may consider the locally closed
subset $\mathcal H_{n,g'}$ of the relative Hilbert scheme of $f:
\mathcal X\setminus X_0\to \mathbb D\setminus \{0\}$, whose points
correspond, for each $t\neq 0$, to the irreducible curves of
geometric genus $g'$ in $\vert \mathcal O_{X_t}(n)\vert$.  By our
assumption, there is a component of $\mathcal H_{n,g'}$ which
dominates $\D\setminus \{0\}$. Let $\mathcal H$ be the closure of
this component in the relative Hilbert scheme of $f: \mathcal X\to
\mathbb D$. By the properness of the relative Hilbert scheme,
$\mathcal H$  surjects onto $\mathbb D$. Hence there is a curve
$C_0\in  \mathcal O_{X_0}(n)$ on $X_0$, which corresponds to a
point in $\mathcal H$. By Albanese's inequality (\ref{alb}), every
component of $C_0$ has geometric genus $g''\le g'<g$. By \eqref
{f200} (for $g\ge 2$) and Example \ref{quintic} (for $g=1$) we
have $\gamma\ge g>g''$, where $\gamma$ stands as before for the
geometric genus of the double curve $\Delta_\Sigma$ of
$X_0=\Sigma$. Hence no component of $C_0$ coincides with
$\Delta_\Sigma$. Now the pull--back $\Gamma$ of $C_0$ on the
normalization $\phi: S\to \Sigma$ belongs to the linear system
$\vert \phi^ *(\mathcal O_\Sigma(n))\vert$ and maps birationally
to $C_0$ by the finite map $\phi$. Since the only curves of genus
smaller than $g$ on $S$ are rulings, $\Gamma$ consists of rulings.
In particular, $\Gamma^ 2=0$. On the other hand, since $\Gamma \in
\vert \phi^ *(\mathcal O_\Sigma(n))\vert=\vert \mathcal
O_S(n)\vert$ we have $\Gamma^2=n^2d>0$, a contradiction.
\eproof

In Proposition \ref{201.1} below we slightly strengthen Proposition \ref
{201}, using Proposition \ref{acz} and Corollary \ref{acz2}.

Keeping in mind Example \ref {quintic}, Proposition \ref{201}
provides an alternative quick proof of the following result
originally established by Xu \cite{Xu1} and Voisin \cite{Vo1,
Vo2}.

\bcor\label{agthm} On a very general surface of degree $d\ge 5$ in
$\PP^3$ there is no rational curve. \ecor

\emph{Very general} in Corollary \ref{agthm}
 can be replaced by \emph{general}
provided the following question is answered in negative.

\begin{quest}\label{212} {\em Does there exist a sequence of
smooth quintic surfaces $X_n$ in $\PP^3$ such that $X_n$ contains
a rational curve of degree $d_n$ and not smaller, with
$d_n\to\infty$?}
\end{quest}

\brem\label{213} Notice that for any integers $n \ge 3$,  $d > 0$
and $0\le \delta \le d^2(n-1)+1$, the linear system $\vert
\mathcal O_S(d)\vert$ on a general K3 surface $S$ of degree $2n-2$
in $\PP^ n$ with Picard group generated by $\mathcal  O_S(1)$,
contains a $(d^2(n -1)-\delta + 1)$--dimensional family of
irreducible $\delta$--nodal curves, whose geometric genus equals
$d^2(n -1)-\delta + 1$ (see \cite {chen}). So $S$ contains nodal
curves of every geometric genus $g\ge 0$. This applies in
particular to general quartic surfaces in $\PP^ 3$. \erem

\subsection{Bounding degrees of curves of low genera on general surfaces in
$\PP^3$}\label{ssec:bound} In this section we address the
following \emph{boundedness question} (cf.\ \cite{Bo, LM}
and the related discussion in the Introduction):

\begin{quest} \label{quest:bound} {\em Given integers $d\ge 5$ and $g\ge 0$,
does there exist a bound $n_{d,g}$ such that every irreducible
curve of geometric genus $g$ on a very general surface of degree
$d$ in $\PP^ 3$ has degree $n\le n_{d,g}$?}
\end{quest}

If $d=4$ the answer is negative (see \cite {chen, Hal} and Remark
\ref{213}). The argument in the proof of Propositions \ref {201}
and \ref {201.1} can be used to give an affirmative answer for
$d\ge 6$ and small enough $g$.

\bprop\label{prop:bound} Suppose there exists a scroll $\Sigma$ of
degree $d\ge 6$ and genus $g\ge 2$ with ordinary singularities.
Then the answer to Question \ref {quest:bound} is affirmative for
all genera $g'<\gamma$, where $\gamma$ is defined in (\ref{401}).
\eprop

\bproof We apply the same argument as in the proof of Proposition
\ref {201}. Keeping the notation of this proposition, we let again
$C_0\in \vert \mathcal O_\Sigma(n)\vert$ denote a curve which is a
limit of a flat family of irreducible curves $\{C_t\}_{t\in
\mathbb D-\{0\}}$, $C_t\in \vert \mathcal O_{X_t}(n)\vert$, of
genus $g'$, where $g'\ge g\ge 2$ by Proposition \ref{201}.
Write $C_0=m_1C_1+\ldots+m_hC_h+C'$ as a cycle, where for every
$i=1,\ldots, h$ the curve $C_i$ is irreducible of geometric
genus $g_i\ge 1$ and its transform on $S$ has positive
intersections $n_i$ with the rulings, whereas $C'$ consists of
rulings. Note that $n=\sum_{i=1}^ h m_in_i$. By Albanese's
inequality (\ref{alb}) and our hypothesis $g'<\gamma$, none of the
components of $C_0$ coincides with $\Delta_\Sigma$, and
\[
g'\ge h+  \sum_{i=1}^ h m_i(g_i-1)\,.
\]
The Riemann--Hurwitz formula yields: $g_i-1\ge n_i(g-1)$ for all
$i=1,\ldots, h$, so that  $\gamma> g'\ge h + n(g-1)$. This
provides a bound $n<(\gamma-1)/(g-1)$ (we remind that $g\ge
2$).
\eproof

\bcor\label{cor:bound} Question \ref {quest:bound} has an
affirmative answer for
\[
\begin{aligned}
&d=6, \quad g\le 5\,,\cr &d\ge 7 \quad {\text even}, \quad
g<(d-4)^2 \,,\cr &d\ge 7 \quad {\text odd}, \quad g<\frac
{(d-3)(2d-9)}2\,\,.\cr
\end{aligned}
\]
\ecor

\bproof For $d=6$ we use the sextic scroll of genus 2 as in
Example \ref {sextic}. For $d\ge 7$ even we write $d=2m+4$ and we
consider in $\PP^3$ general projections of smooth scrolls of genus
$m$ and degree $d$ in $\PP^ 5$ as in Theorem \ref {thm:scrolls}.
For $d\ge 7$ odd we write $d=2m+3$ and we consider general
projections of scrolls of genus $m$ and degree $d$ in $\PP^ 4$ as
in Remark \ref {rem:3space}.2. Applying Proposition \ref
{prop:bound} and taking into account \eqref {f200}, the assertion
follows. \eproof

\subsection{Families of  low degree curves of a given genus
on general surfaces in $\PP^3$}
 Proposition \ref{acz} below
extends a similar result by Arbarello--Cornalba \cite[Theorem
3.1]{ac2}, \cite {ac1} and Zariski \cite{za}; cf. also Knutsen
\cite[Lemma 4.4]{Kn}.

Let $S$ be a smooth projective surface,  Hilb$_1(S)$ the Hilbert
scheme of curves on $S$, and $\mathcal V_g(S)$ the locally closed
subset of Hilb$_1(S)$ formed by irreducible curves of geometric
genus $g$.

\begin {prop} \label{acz} In the setting as before,
for an irreducible component $\mathcal V$ of $\mathcal V_g(S)$ we
let $v=\dim(\mathcal V)$ and $\kappa=K_S\cdot \Gamma$, where a
curve $\Gamma$ on $S$ corresponds  to a general point in $\mathcal
V$. Then $v\le \max\{g,g-1-\kappa\}$. Furthermore, if $v>g$ then
$v=g-1-\kappa$, and the general curve $\Gamma$ of $\mathcal V$ has
only nodes as singularities.
\end{prop}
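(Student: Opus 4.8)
The plan is to pass to the normalization and convert $\dim\cV$ into a cohomological statement about the normal sheaf of a map, after which Riemann--Roch and Clifford's theorem finish the estimate. Let $\nu\colon C\to S$ be the normalization of the curve $\Gamma$ attached to a general point of $\cV$, so that $C$ is smooth of genus $g$ and $\nu$ is birational onto $\Gamma$. Since $\nu$ is generically an immersion, the differential $d\nu\colon T_C\to \nu^*T_S$ is injective as a sheaf map, and the normal sheaf $N_\nu$ sits in
\[
0\to T_C\xrightarrow{\,d\nu\,}\nu^*T_S\to N_\nu\to 0 .
\]
Let $\tau\subseteq N_\nu$ be the torsion subsheaf, supported exactly where $\nu$ fails to be an immersion, and let $\bar N_\nu=N_\nu/\tau$, a line bundle on $C$. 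Taking degrees gives $\deg N_\nu=2g-2-\kappa$, whence $\deg\bar N_\nu=2g-2-\kappa-\deg\tau$.

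The first key step is the inequality $v\le h^0(C,\bar N_\nu)$. Deforming $\Gamma$ inside $\cV$ amounts to deforming the map $\nu$ keeping $C$ smooth of genus $g$, and the associated characteristic map sends $T_{[\Gamma]}\cV$ into $H^0(C,N_\nu)$; composing with $N_\nu\to\bar N_\nu$ yields $T_{[\Gamma]}\cV\to H^0(C,\bar N_\nu)$. Following Arbarello--Cornalba \cite{ac2}, I would argue that for the general member of the component this composite is injective: a tangent vector in its kernel is a first--order deformation whose normal component vanishes away from the singular points, and for a family of distinct reduced curves the only such torsion directions (unfoldings of a singularity) are either obstructed, when the singularity persists in $\cV$, or else already accounted for by genuine normal motions; in both situations they are not tangent to $\cV$. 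Granting this, $v\le h^0(C,\bar N_\nu)$.

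I would then apply Riemann--Roch and Clifford to $\bar N_\nu$, splitting into two cases. If $\bar N_\nu$ is non--special, then $h^0(\bar N_\nu)=\deg\bar N_\nu-g+1=g-1-\kappa-\deg\tau\le g-1-\kappa$, so $v\le g-1-\kappa$. If $\bar N_\nu$ is special, then $\deg\bar N_\nu\le 2g-2$, which forces $\kappa+\deg\tau\ge0$; Clifford's theorem (or $h^0(\bar N_\nu)=0$) then gives
\[
v\le h^0(\bar N_\nu)\le \tfrac12\deg\bar N_\nu+1=g-\tfrac12(\kappa+\deg\tau)\le g .
\]
In either case $v\le\max\{g,\,g-1-\kappa\}$, the main assertion.

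For the final statement suppose $v>g$. The special case yields $v\le g$, so $\bar N_\nu$ must be non--special, i.e. $h^1(N_\nu)=h^1(\bar N_\nu)=0$; hence the deformations of $\nu$ are unobstructed and $\cV$ is smooth at the general point $[\Gamma]$ of dimension $h^0(N_\nu)=g-1-\kappa$, so $v=g-1-\kappa$. Comparing with $v\le h^0(\bar N_\nu)=g-1-\kappa-\deg\tau$ forces $\deg\tau=0$, so $\nu$ is an immersion and every branch of $\Gamma$ is smooth. It remains to see that the singularities are ordinary nodes, and this is where I expect the \emph{real work} to lie: the cohomological estimates only give an immersion, and one must rule out tacnodes and points of multiplicity $\ge 3$. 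Because $h^1(\bar N_\nu)=0$, the curve moves in general position, so I would show by a dimension count that a non--nodal singularity imposes independent conditions which a general member of an at least $(g{+}1)$--dimensional component cannot satisfy, leaving only transverse double points on the general $\Gamma$, as in \cite{ac2}.
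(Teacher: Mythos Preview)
Your proposal is correct and follows essentially the same route as the paper's proof: both set up the normal sheaf $N_\nu$ of the normalization map, split off its torsion, invoke the Horikawa inclusion $T_{[\Gamma]}\cV\hookrightarrow H^0(\bar N_\nu)$ (which the paper simply cites from \cite{ac2} rather than giving the heuristic you supply), and then treat separately the special and non--special cases via Riemann--Roch. The paper's handling of the special case is marginally slicker---if $h^1(\bar N_\nu)>0$ then $\bar N_\nu$ injects into $K_C$, whence $h^0(\bar N_\nu)\le h^0(K_C)=g$ directly, without Clifford---and for the conclusions $\tau=0$ and ``only nodes'' when $v>g$ the paper defers entirely to \cite[proof of (1.5) and pp.~96--98]{ac2}, which is exactly the reference you anticipate needing for the dimension count ruling out non--nodal singularities.
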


\begin{proof} Let $f: C\to \Gamma$ be the normalization.
The exact sequence
\[
0\to T_C\to f^ *(T_S)\to N_f\to 0
\]
defines the \emph{normal sheaf} $N_f$ to the map $f: C\to S$. It
can be included into an exact sequence
\[
0\to \tau \to N_f \to N'\to 0 \,,
\]
where $\tau$ is the torsion subsheaf of $N_f$ supported at the
points, where the  rank of the differential of $f$ drops, and $N'$
is an invertible sheaf.  Due to the {\em Horikawa inclusion}
$T_{[\Gamma]}(\mathcal V)\subseteq H^ 0(C,N')$ (see
\cite[(1.3)]{ac2} or \cite[Lemma 1.4]{ac1}) we have $v\le
h^0(C,N')$. By Riemann-Roch, $$h^0(C,N')=\deg (N')-g+1+h^1(C,N'),
\quad\text{where}\quad \deg(N')\le\deg(N_f)=2g-2-\kappa\,.$$ If
$h^1(C,N')=0$ this gives $v\le g-1-\kappa$. Otherwise $N'$ is
special, so $h^0(C,N')\le g$.
In any case, $v\le \max\{g,g-1-\kappa\}$, as stated.

If $v>g$ then  $h^ 1(C, N')=0$.
Since $H^1(C,\tau)=0$
this yields $H^ 1(C, N_f)=0$. As in \cite[proof of (1.5) and p.
96]{ac2} this implies $\tau=0$, hence $\Gamma$ is immersed
(i.e., has no cuspidal
singularities). One ends the proof as in \cite[pp. 96--98]{ac2}.
\end{proof}

For $\mathcal L_d=\vert \mathcal O_{\PP^ 3}(d)\vert$ we let
\be\label{400} N_d=\dim\,(\mathcal L_d)={{d+3}\choose 3}-1\,. \ee
Given a smooth surface  $X$ of degree $d$ in $\PP^ 3$ and
non--negative integers $n,g$, we let $\mathcal V_{n,g}=\mathcal
V_{n,g}(X)$ denote the locally closed subset of  $\mathcal
L_{X,n}=\vert \mathcal O_X(n)\vert$  formed by irreducible curves
on $X$ of geometric genus $g$. We also let
\[
g_{d,n}=\frac {dn(d+n-4)}2+1\,
\]
denote the arithmetic genus of the curves in $\mathcal L_{X,n}$.
Notice that $g_{d,n}=g+\nu$ if a general member of $\mathcal
V_{n,g}$ is nodal with $\nu$ nodes.

\bcor\label{acz2} Let $X$ be a general surface of degree $d\ge 3$
in $\PP^ 3$. If $g\ge 0$ and $n\in\{1,2\}$ are such that
$\mathcal V_{n,g}$ is nonempty, then
\[
 g_{d,1}-3 \le g\le g_{d,1}
\quad \text {if} \quad n=1 \quad \text {and} \quad g_{d,2}-9\le
g\le g_{d,2} \quad \text {if} \quad n=2\,.
\]
Furthermore, for every irreducible component $\mathcal V$ of $\mathcal
V_{n,g}$,  its general curve has exactly $\nu$ nodes as singularities and
its dimension is
\be\label{trahtibidoh} 3-\nu=
g-g_{d,1}+3 \quad \text {if} \quad n=1\quad \text {and}\quad
 9-\nu=  g- g_{d,2}+9
 \quad \text {if} \quad n=2\,.
 \ee
\ecor

\begin{proof} Let us show the assertion in the case $n=2$,
the case $n=1$ being similar. Consider the incidence relation
$I\subseteq\mathcal L_d \times \mathcal L_2$ consisting of all
pairs $(X,Q)$ such that $X$ is smooth and $Q$ and  $X$ intersect in an
irreducible curve $C$ of geometric genus $g$. Then $I$ is locally
closed and comes equipped with  the natural  projections $p: I\to
\mathcal L_d$ and $q: I\to \mathcal L_2$.

Note that if $(X,Q)\in I$ and $C$ is the intersection of $X$ and
$Q$, then we have a family of dimension $\dim(\mathcal L_{d-2})+1$
of pairs $(X',Q)\in I$ such that  intersection of $X'$ and $Q$ is
$C$: indeed we can take $X'$ general in the span of $X$ and of all
surfaces of degree $d$ containing $Q$.

By our assumption $p$ is dominant. Let $I'$ be an irreducible
component of $I$ which dominates $\mathcal L_d$ via $p$, so that
$\dim(I')\ge N_d$. We assume that $q(I')$ contains a smooth
quadric  $Q$  (the argument is similar otherwise, the details are
left to the reader). Then $I'$ dominates $\mathcal L_2$ via $q$
and we may assume $Q$ to be a general quadric. All components of
$q^ {-1}(Q)$ have dimension $\dim(I')-\dim(\mathcal L_2)$. Any
such component can be identified with a family of surfaces of
degree $d$. By the above discussion, the family of curves
$\mathcal V$ they cut out on $Q$ has dimension
\[v=\dim(I')-\dim(\mathcal L_{d-2})-\dim(\mathcal L_2)-1\,.\]
Moreover, $\mathcal V$ is an irreducible component of $\mathcal
V_{d,g}(Q)$. We have \[v\ge
N_d-N_{d-2}-N_2-1=g_{d,2}+4d-10>g_{d,2}\ge g.\] By Proposition
\ref {acz}, one has $v=g-1+4d$, which yields $g_{d,2}-g\le 9$.
Furthermore, by Proposition \ref{acz} the general curve in
$\mathcal V$ has at most nodes as singularities, which implies
(\ref{trahtibidoh}).
\end{proof}

 Corollary \ref {acz2}  could be extended to handle  also the case $n=3$.
 This requires however to analyze a number of cases,
 which we avoid here.

 Now we can strengthen Proposition \ref{201} as follows.

\bprop\label{201.1} Assume that there exists a scroll
$\Sigma$ of degree $d\ge 5$ and genus $g\ge 1$ in $\PP^3$ with ordinary
singularities.  Then a
very general surface $X$ of degree $d$ in $\PP^3$ does not contain
curves of geometric genus $g'\le 3(g-1)$.  \eprop

\begin{proof}
By  Proposition \ref{201}  we may suppose that $d\ge 6$ and $g'\ge
g\ge 2$. We proceed as in the proof of this proposition, using the
same notation. We argue by contradiction and assume that there is
a positive $g'\le 3(g-1)$, a positive integer $n$ and a component
of $\mathcal H_{n,g'}$ which dominates $\mathbb D\setminus \{0\}$.
Consider a curve $C_0\in\cO_{\Sigma}(n)$ as in the proof of
Proposition \ref {201}. As shown in this proof, $C_0$ cannot be
composed of rulings. Hence it contains a component $C_i$ of
geometric genus $g_i>0$, appearing in $C_0$ with multiplicity
$m_i$. By Albanese's inequality \eqref {alb}  one has $g'-1\ge
m_i(g_i-1)$. By (\ref{002}) and our assumption $g'\le
3(g-1)<\gamma$, hence $C_i\neq \Delta_\Sigma$. Therefore $C_i$
lifts birationally to the normalization $S$ of $\Sigma$ yielding a
$\nu_i$--secant of the ruling on $S$. Combining the inequalities
above, by Hurwitz Formula (see Remark \ref {rem:genus}) we obtain
$$3(g-1)-1\ge g'-1\ge m_i(g_i-1)\ge \nu_i m_i(g-1)\,.$$
Hence $\nu_i m_i\le 2$ and so the only
possibilities are
\[ \nu_i=m_i=1, \quad   \nu_i=1,\, m_i=2,\quad {\rm  and}\quad
\nu_i=2,\, m_i=1.\]
In the former case by \eqref {alb} there can be at most two such
components, while in the latter two cases at most one.
We have $n=\sum_i\nu_i m_i$, the sum over all components
$C_i$ of $C_0$ of positive genus.
It follows that $1\le n\le 2$. Then Corollary \ref
{acz2} yields $g'\ge g_{d,1}-3$, since $g_{d,1}-3<g_{d,2}-9$ for
$d\ge 6$. Thus we must have
\begin{equation}\label{eq:ineq}
\frac {(d-1)(d-2)}2-3=g_{d,1}-3\le g'\le 3(g-1)\le \frac
{d(d-5)}2\,,
\end{equation}
the last inequality coming from \eqref {f300}
for $d\ge 6$. But \eqref {eq:ineq}
gives a contradiction.\end{proof}

\section{Bounding geometric genera of divisors on general
$3$-folds in $\PP^4$}
\label{sec:higher} A simple way of constructing higher dimensional
scrolls consists in starting with the trivial $\PP^1$--bundle
$\pi:S=E\times\PP^1\to E$ over a smooth projective variety
$E\subseteq\PP^m$ of degree $d$ and dimension $n$. Let
$\Seg_{a,b}$ denote the image of $\PP^a\times\PP^b$ via the
\emph{Segre embedding}. Then
$$S\hookrightarrow\PP^m\times\PP^1\stackrel{\simeq}
{\longrightarrow}\Seg_{m,1} \hookrightarrow\PP^{2m+1}\, $$ yields
an embedding of $S$ as a smooth scroll of dimension $n+1$ and
degree $(n+1)d$ in $\PP^{2m+1}$. A general linear projection of
$S$ to $\PP^{n+2}$ gives a hypersurface scroll $\Sigma\subseteq
\PP^{n+2}$ of degree $(n+1)d$.

Consider, for instance, a surface $E_d$ in $\PP^3$ of degree $d$,
which we suppose to be very general. The above construction gives
$$S_d:=E_d\times\PP^1\hookrightarrow\Seg_{3,1}\hookrightarrow\PP^7\,,$$
and $S_d$ is a threefold of degree $3d$ in $\PP^7$. A general
linear projection of $S_d$ to $\PP^4$ yields a threefold scroll
$\Sigma_d$ of degree $3d$ in $\PP^4$. It is swept out by a
two-dimensional family of $(3d-3)$-secant lines to the double
surface $\Delta_\Sigma$ (see Lemma \ref{199}).

 The following version of the Albanese inequality follows
immediately from the Semistable Reduction Theorem \cite[\S
1]{Morr} and the Geometric Genus Criterion (see formula (1) on p.\
119 in \cite[\S 6]{Morr} or, in the surface case, formula (8) in
\cite[Ch.\ 5, \S 5]{KK}).

\blem\label{lem:limit} Let $X$ be a flat limit of a one-parameter
family of smooth, irreducible, projective varieties
%surfaces of
of geometric genus $\rho$. Let $X_i$ be irreducible components of
$X_0$ with geometric genera $\rho_i$, $i=1,\ldots,h$. Then
\[ \rho\ge \sum_{i=1}^ h \rho_i\,.\]
\elem

 Recall that, in the notation as in (\ref{400}), the geometric
genus $\rho(E_d)$ of a smooth surface $E_d$ of degree $d$ in
$\PP^3$ is equal to $\rho(E_d)={{d-1}\choose{3}}=N_{d-4}+1$ (see e.g.\
\cite[Ch.\ 4, (5.12.2)]{KK}). The following lower bound on the
geometric genus of the double surface is an analog of (\ref{002})
in the case of surface scrolls.

\blem\label{lem:limit2} Let $\Sigma_d\subseteq \PP^ 4$ be a
threefold scroll of degree $3d$, constructed as before over a very
general surface $E_d$ in $\PP^3$ of degree $d\ge 5$ as a base.
Then for  the geometric genus $\rho_d$ of the double surface
$\Delta_{\Sigma_d}$ we have a lower bound
\begin{equation}\label{eq:pd}
\quad \forall d\ge 5\,.
\end{equation}
\elem

\bproof Degenerate $E_d$ to $E_{d-1}\cup E_1$, where $E_{d-1}$ and
$E_1$ are general. Then $S_d$ degenerates to the union of
$S_{d-1}$ and $S_1=\Seg_{1,2}$, meeting along the Segre image $X$
of $C\times \PP^ 1$, where $C=E_1\cap E_{d-1}$. Accordingly,
$\Sigma_d$ degenerates  in $\PP^4$ to the union of $\Sigma_{d-1}$
and $\Sigma_1$, the latter being a hypersurface of degree 3 with a
double plane. These threefolds intersect along the general
projection $Y$ of $X$, plus another surface $Z$. The limit of the
double locus $\Delta_{\Sigma_d}$ consists of the union of
$\Delta_{\Sigma_{d-1}}$, of the plane $\Delta_{\Sigma_{1}}$, and
of $Z$. The ruling determines a dominant rational map $Z\dasharrow
E_{d-1}$. So there is at least one component $Z'$ of $Z$ with
geometric genus $\rho'\ge\rho(E_{d-1})$.
 Now the first inequality in \eqref {eq:pd} follows from Lemma
\ref {lem:limit}. In particular, $\rho_5\ge\rho'\ge 1$. By
induction for every $d\ge 5$ we obtain $$\rho_d\ge\rho(E_{d-1})+
\rho_{d-1}\ge\sum_{k=5}^{d-1}
\rho(E_{k})+\rho_5\ge\sum_{k=0}^{d-2}
{{k}\choose{3}}={{d-1}\choose{4}}\,,$$ as required. \eproof

It would be interesting to find the precise value of $\rho_d$.

\begin{thm}\label{thm:pg}  Any irreducible surface contained in a
very general hypersurface of degree $3d\ge 15$ in $\PP^4$
 has geometric genus $\rho\ge \min\{\rho_d, N_{d-4}+1\}$. In
 particular,
 $\rho\ge \rho(E_{d})$  if $d\ge 8$.
\end{thm}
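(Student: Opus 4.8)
The plan is to transplant the degeneration argument of Proposition \ref{201} to the relative setting of threefolds in $\PP^4$, degenerating the very general hypersurface $X$ of degree $3d$ to the scroll $\Sigma_d$ built over $E_d$. First I would record that, $X$ being a smooth hypersurface threefold, the Lefschetz hyperplane theorem gives $\Pic(X)=\Z\cdot\cO_X(1)$; hence any irreducible surface $T\subseteq X$ is a divisor lying in some linear system $\vert\cO_X(n)\vert$. Arguing by contradiction, suppose a very general such $X$ carries a surface of geometric genus $\rho<\min\{\rho_d,N_{d-4}+1\}$. Forming the pencil through $X_0=\Sigma_d$ and $X_\infty=X$ produces a flat family $f\colon\cX\to\D$, and exactly as in Proposition \ref{201} the properness of the relative Hilbert scheme yields, for some $n$, a flat limit $T_0\in\vert\cO_{\Sigma_d}(n)\vert$ of these surfaces. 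Writing $T_0=\sum_i m_iT_i$ as an effective cycle, the Albanese-type inequality of Lemma \ref{lem:limit} shows that every component $T_i$ has geometric genus at most $\rho$.

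Next I would rule out the double surface as a component: since $\rho<\rho_d$ and $\Delta_{\Sigma_d}$ has geometric genus $\rho_d$, no $T_i$ can equal $\Delta_{\Sigma_d}$. As $\Delta_{\Sigma_d}$ is the irreducible double surface (condition (C3)), every $T_i$ then meets the smooth locus of $\Sigma_d$ in a dense open set and lifts birationally, through the normalization $\phi\colon S_d=E_d\times\PP^1\to\Sigma_d$, to a surface $\tilde T_i\subseteq S_d$ of the same geometric genus. Thus $\phi^*T_0=\sum_i m_i\tilde T_i$ belongs to $\vert nH\vert$, where $H=\cO_{S_d}(1)$ and $H\cdot F=1$ for a ruling $F$, i.e.\ a fibre $\{p\}\times\PP^1$ of $\pi\colon S_d\to E_d$.

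The decisive step is the intersection-theoretic dichotomy that replaces the contradiction $\Gamma^2=n^2d>0$ versus $\Gamma^2=0$ of Proposition \ref{201}. A general ruling satisfies $\phi^*T_0\cdot F=nH\cdot F=n>0$, so $F$ must meet $\phi^*T_0$; yet any \emph{vertical} component $B\times\PP^1$, with $B\subsetneq E_d$ a curve, misses a general fibre and contributes nothing. Hence at least one $\tilde T_j$ is non-vertical, so $\pi$ restricts on it to a dominant, generically finite morphism onto $E_d$; pulling back holomorphic $2$-forms along a desingularization gives $p_g(\tilde T_j)\ge p_g(E_d)=N_{d-4}+1$. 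This forces $\rho\ge p_g(T_j)=p_g(\tilde T_j)\ge N_{d-4}+1$, contradicting $\rho<N_{d-4}+1$ and establishing $\rho\ge\min\{\rho_d,N_{d-4}+1\}$. The closing ``in particular'' is then pure arithmetic: by Lemma \ref{lem:limit2} one has $\rho_d\ge{d-1\choose 4}\ge{d-1\choose 3}=N_{d-4}+1=\rho(E_d)$ exactly when $d\ge 8$, so the minimum equals $\rho(E_d)$ in that range.

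The hard part will not be the soft ``vertical versus dominant'' dichotomy on the product $E_d\times\PP^1$, but the bookkeeping surrounding the flat limit and the normalization. One must check that the tracked family of surfaces genuinely limits to a cycle in $\vert\cO_{\Sigma_d}(n)\vert$, so that the clean numerical identity $\phi^*T_0\cdot F=n$ is available, and that each component other than $\Delta_{\Sigma_d}$ lifts to $S_d$ with geometric genus unchanged. Both hinge on the irreducibility of $\Delta_{\Sigma_d}$ and on $\Sigma_d$ being normal away from its double surface, which are exactly the hypotheses built into conditions (C1)--(C5) for the scroll $\Sigma_d$.
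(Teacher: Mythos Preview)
Your proposal is correct and follows essentially the same approach as the paper's proof: degenerate to the scroll $\Sigma_d$, use Lemma~\ref{lem:limit} to bound the geometric genera of the components of the limit cycle, exclude $\Delta_{\Sigma_d}$ via $\rho<\rho_d$, lift the remaining components to $S_d=E_d\times\PP^1$, and obtain a contradiction from the fact that no component can dominate $E_d$ while the total pullback in $|nH|$ must meet a general ruling. You supply more detail than the paper does (the explicit appeal to Lefschetz for $\Pic(X)$, the pullback-of-$2$-forms justification for $p_g(\tilde T_j)\ge p_g(E_d)$, and the arithmetic behind the ``in particular'' clause), but the architecture is identical.
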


\bproof The argument is similar to that in the proof of
Proposition \ref {201}, so we will be brief. We let $X_0$ be the
scroll $\Sigma_d$ and  $X$ be a general hypersurface  in $\PP^4$
of degree $3d$. The pencil generated by $X_0$ and $X$ gives rise
as  usual to a flat family $f: \mathcal X\to \mathbb D$. Suppose
that the general fibre of this family contains an irreducible
surface $Y$ of geometric genus $\rho< \min\{\rho_d, N_{d-4}+1\}$.
By Lemma \ref {lem:limit2}  the limit $Y_0$ of such a surface in
the central fibre does not contain $\Delta_{\Sigma_d}$. By Lemma
\ref {lem:limit} all of its components have geometric genus
$\rho'\le\rho< \min\{\rho_d, N_{d-4}+1\}\le N_{d-4}+1$. Hence they
cannot dominate $E_d$, which has geometric genus $N_{d-4}+1$. Thus
all components of $Y_0$ pull--back to $S_d$ to surfaces with zero
intersection with the ruling. This yields a contradiction as in
the proof of Proposition \ref {201}.
\eproof

\brem\label{rem:bound2} G. Xu gave in \cite[Theorem 2]{Xu1} a
sharp lower bound for the geometric genus of an irreducible
divisor on a very general hypersurface of degree $d\ge n+2$ in
$\PP^ n$, with $n\ge 4$. Of course Theorem \ref {thm:pg} above is
weaker than Xu's result. However, the method of proof is simple
and it may possibly have further applications. Hence it would be
interesting to extend Theorem \ref {thm:pg} to other degrees
(non-divisible by $3$), as well as to higher dimensions. We wonder
also whether  in higher dimensions an analog of Proposition
\ref{prop:bound} holds. For instance, one can suggest by analogy
that on a very general threefold in $\PP^4$ of degree $\ge 6$, the
divisors of geometric genera $\rho'<\rho_d$ form bounded families.
\erem

\section{Degeneration to scrolls and Kobayashi hyperbolicity}\label{sec:brody}
\subsection{Limiting Brody curves and Hurwitz Theorem}
Let $V$ be a subvariety of a
 hermitian complex manifold.
 A {\em Brody curve} in $V$ is a holomorphic map
$f:\C\to V$ satisfying
$$\sup_{z\in\C} ||df(z)||=||df(0)||=1\,.$$ By \emph{Brody's
reparametrization lemma} (\cite {Bro}), if $V$ is proper and
non--hyperbolic then it contains a Brody curve. Furthermore, from
any sequence of Brody curves in $V$ one can extract a subsequence
converging to a Brody curve, which is called a {\em limiting Brody
curve}.

Assume there is a proper dominant map $\pi:V\to C$ onto a smooth
projective curve $C$. If general fibres $D_c=\pi^{-1}(c)$ ($c\in
C$) are non--hyperbolic, i.e., contain Brody curves, then every
special fibre $D_0:=D_{c_0}$ is non-hyperbolic as well and
contains limiting Brody curves. The Hurwitz Theorem imposes
constrains on limiting Brody curve with respect to the
singularities of $D_{0}$ (cf.\ e.g., \cite[\S 1]{SZ1},
\cite[Theorem 2.1]{Za1}, and \cite[Lemma 1.2]{Za2}). Let
$\Delta_0=\br (D_{0})$ be the set of multi--branch points of
$D_{0}$ such that locally the branches of $D_{0}$ are
$\Q$--Cartier divisors on $V$, and let $\Delta$ be the Zariski
closure of ${\Delta_0}$. Consider a limit $f:\Omega \to D_{0}$ of
a sequence of holomorphic maps $f_n:\Omega \to
%D_{n}:=
D_{c_n}$, with $c_n\in C\setminus \{c_0\}$ such that $c_n\to c_0$,
where $\Omega\subseteq\C$ is a connected domain.  Hurwitz' Theorem
says that, if $f(\Omega)\cap\Delta_0\neq\emptyset$, then
$f(\Omega)\subseteq\Delta$. In particular, if $\Delta$ is
hyperbolic then any limiting Brody curve in $D_{0}$ is contained
in $D_{0}\setminus \Delta_0$. Hence if both $\Delta$ and
$D_{0}\setminus \Delta_0$ are hyperbolic then all fibres $D_{c}$
($c\neq c_0$) close enough to $D_{0}$ are hyperbolic as well (cf.\
\cite{Za1}).

\subsection{A hyperbolicity criterion for hypersurfaces in
$\PP^n$}
%projective space}
Let $X_0$, $X_\infty$ be distinct hypersurfaces in $\PP^n$ of
degree $d$. Typically, $X_\infty$ will be a general surface of
degree $d$ meeting $\Sing(X_0)$ in points, where locally $X_0$ is
a union of two smooth branches intersecting transversally.
Consider the associated linear pencil $\{X_t\}_{t\in \PP^ 1}$.

Assume that for a general $t\in\PP^1$ the hypersurface $X_t$ is
non--hyperbolic. Then there exists a sequence of Brody curves
$\varphi_n:\C\to X_{t_n}$ (with respect to the Fubini--Study
metric on $\PP^n$), where $t_n\to 0$, converging to a limiting
(non--constant) Brody curve $\varphi_0:\C\to X_{0}$.

\bprop \label{pr1} In the above setting, let $B=
X_\infty\cap\overline{\br( X_0)}$. If $\overline{\br( X_0)}$
and $(X_0\setminus\br (X_0))\cup B$ are both hyperbolic, then
$X_{t}$, for  $t\neq 0$ close enough to $0$, is hyperbolic as
well.
\eprop

\bproof  By Hurwitz' Theorem and the hypotheses, the image of
$\varphi$ cannot be contained in $\overline{\br( X_0)}$, and it
can meet $\overline{\br( X_0)}$ only at $\left(\overline{\br(
X_0})\setminus \br( X_0)\right)\cup B$. But then it is contained
in $(X_0\setminus\br (X_0))\cup B$, a contradiction.  \eproof

\brem\label{010} Hurwitz' Theorem cannot be applied at points in
$\left(\overline{\br( X_0})\setminus \br( X_0)\right)\cup B$, e.g.
at a \emph{pinch point} of $X_0\subseteq \PP^3$, where $X_0$ is
locally analytically isomorphic to the surface $x^ 2=yz^ 2$ in
$\mathbb A^ 3=\mathbb A_\C^ 3$ at the origin, or at a base
point of the pencil situated on $\br( X_0)$.

Indeed, consider a linear pencil of surfaces given in an affine
chart $\mathbb A^3$ of $\PP^3$ as $X_t=\{x^2-y^2z=t\}$. The origin
${\bf 0}\in \mathbb A^3$ is a pinch point of $X_0$ and is not a
base point of the pencil. Consider also the family of entire
curves
$$\phi_\tau: \C\to X_{t},\qquad u\longmapsto
(u^2+\tau,u,u^2+2\tau),\quad\mbox{where}\quad t=\tau^2\in\C\,.$$
The limiting entire curve $\phi_0(\C)\subseteq X_0$ passes through
the pinch point ${\bf 0}\in X_0$ and is not contained in the
singular locus $\{x=y=0\}=\br( X_0)\cup\{\bf 0\}$ of $X_0$.
\erem

\bcor\label{mcor} In the same setting as before, consider the
normalization $\nu:\bar X_0\to X_0$. Suppose that $\overline{\br
(X_0)}$ is hyperbolic and there is a morphism $\pi:\bar X_0 \to E$
onto a hyperbolic variety $E$ such that for every $x\in E$
\be\label{vr} \pi^{-1}(x)\setminus \nu^{-1}({\br (X_0)}\setminus
(X_\infty\cap \br (X_0))\ee is hyperbolic. Then any hypersurface
$X_t\neq X_0$ for $t$ close enough to $0$ is hyperbolic.
Consequently, a very general hypersurface of degree $d$ in $\PP^n$
is algebraically hyperbolic.
\ecor

\bproof We keep the notation introduced before. Suppose that for
$t\in \PP^ 1$ general, $X_t$ is not hyperbolic. Let $\phi_0:\C\to
X_0$ be a (non--constant) limiting Brody curve. Since its image
cannot be contained in $\overline{\br (X_0)}$, there is a pullback
$\tilde\phi_0:\C\to \bar X_0$. Since $E$ is hyperbolic, the
composition $\pi\circ\tilde\phi_0:\C\to E$ is constant. Hence
$\tilde\phi_0(\C)$ is contained in a fibre $\pi^{-1}(x)$ over a
point $x\in E$. Furthermore, it does not meet $\nu^{-1}({\br
(X_0)}\setminus (X_\infty\cap \br (X_0))$. Indeed, otherwise
$\phi_0(\C)$ would meet $\br (X_0)\setminus (X_\infty\cap \br
(X_0))$ and, by Hurwitz' Theorem, it  would be contained in
$\overline {\br (X_0)}$,  which is impossible. Then
$\tilde\phi_0(\C)$  lies in \eqref{vr}, a contradiction.
\eproof

\subsection{Applying scrolls to Kobayashi hyperbolicity}\label{ssec:d=67}
\bprop\label{200} We keep the notation as in Subsection
\ref{ssec:general}. Let $\Sigma\subseteq\PP^n$ be a hypersurface
scroll with ordinary singularities satisfying conditions
(C1)-(C5). Suppose that: \bnum\item[(i)] the base $E$ of $\Sigma$
and its double locus $\Delta_\Sigma$ are both hyperbolic;
\item[(ii)] for  a general hypersurface $X$ in $\PP^n$ of degree $d=\deg(\Sigma)$,
every ruling $F$ of $\Sigma$ meets $\br(\Sigma)$ in at least three
distinct points off $X\cap F$. \enum Then a general hypersurface
of degree $d$  in $\PP^n$ is hyperbolic. \eprop

\bproof  The assertion follows by applying
 Corollary \ref{mcor} with $X_\infty=X$, $X_0=\Sigma$,
 and $\overline{\br(X_0)}=\Delta_\Sigma$.
 \eproof
Consider a general sextic scroll of genus 2 as introduced in
Example \ref {sextic} and a general septic scroll, also of genus
2, with ordinary singularities in $\PP^ 3$. The latter scroll
exists according to Theorem \ref {thm:scrolls} and Remark \ref
{rem:3space}, (ii).

\blem\label{100} For a general scroll $\Sigma\subseteq \PP^ 3$ of
genus 2 and degree either $d=6$ or $d=7$, the following hold:
\begin{itemize}
\item [(i)]~the projection $\pi:\Delta_S\to E$ has only
simple ramifications; in particular $\Delta_S$ meets
every ruling in at least three distinct points;
\item [(ii)]~ no pair of pinch points on $S$ sit on the same ruling;
\item [(iii)]~ the rulings passing through the pinch points on $S$ are not
tangent to $\Delta_S$.
\end{itemize}
\elem

\bproof  We first treat the case $d=6$.

The conditions (i)--(iii) are open in  $\mathcal H=\overline
{\mathcal H_{6,2}}$. So it suffices to show that there is a
surface in $\mathcal H$ satisfying these conditions. The reducible
surface $\Sigma_0$ in Example \ref {sextic} could be used for
this, once we know that the analogues of (i)--(iii) hold for a
general elliptic quintic scroll. This is in fact the case, but we
do not dwell on this here. We use instead a different degeneration
of a general sextic scroll of genus 2. We keep the notation
introduced in Example \ref {sextic}.

A smooth quadric $\tilde Q$ in $\PP^4$ can be viewed as a
hyperplane section of the Grassmanian $\Gr(1,3)$ under the
Pl\"ucker embedding of $\Gr(1,3)$ in $\PP^5$. There exists a curve
$E_0$ of degree $6$ and arithmetic genus $2$ on $\tilde Q$, which
consists of three conics $\Gamma_0, \Gamma_1, \Gamma_2$ such that
$\Gamma_1, \Gamma_2$ are disjoint and intersect both $\Gamma_0$
transversally at two points. Indeed, it is enough to take two
general hyperplanes $H_1,\, H_2$ in  $\PP^4$ meeting in a plane
$L_0$ and two other general planes $L_i\subseteq H_i$, $i=1,2$,
and let $\Gamma_i=L_i\cap\tilde Q$, $i=0,1,2$.

The surface $\Sigma_0\subseteq\PP^3$, which corresponds to the
curve $E_0$, is the union of the three quadrics $Q_0, Q_1, Q_2$,
corresponding to $\Gamma_0, \Gamma_1, \Gamma_2$, respectively. We
may suppose that these quadrics are smooth. The surface $\Sigma_0$
belongs to  $\mathcal H$.
One has $Q_0\cap Q_i= F_{ij}\cup G_{ij}$, $i,j=1,2$, where the
lines $F_{ij}$'s correspond to the intersection points of
$\Gamma_0$ with $\Gamma_i$ and belong to the same rulings of $Q_0$
and $Q_i$, and $G_{ij}$ are lines of the other rulings of $Q_0$
and $Q_i$. Furthermore $Q_1\cap Q_2=\varrho$ is a smooth
quartic curve of genus $1$. By taking $Q_0, Q_1, Q_2$
sufficiently general, we may suppose that the lines $F_{ij},
G_{ij}$ are general in their rulings and $\varrho$ is also
general. We denote by $p_{ij;hk}$ the intersection of $F_{ij}$
with $G_{hk}$, where $i,j,h,k\in\{1,2\}$. We note that $\varrho$
meets $Q_0$ at the eight points $p_{ij;3-i,h}$, with $i,j,
h\in\{1,2\}$.

Regard now $\Sigma_0$ as a limit of a general sextic scroll
$\Sigma$ of genus 2. The points of $\Gamma_0\cap\Gamma_i$,
$i=1,2$, are smoothed when deforming $E_0$ to $E$, hence also the
lines $F_{ij}$ are. Therefore the limit of the smooth double curve
$C=\Delta_\Sigma$ is the curve
\[ C_0=\Delta_{\Sigma_0}=\varrho \cup \bigcup _{i,j=1,2} G_{ij} \]
of degree 8 and arithmetic genus 5. The limit on $\Sigma_0$ of the
ruling on $\Sigma$ is the union of the rulings of $Q_0, Q_1, Q_2$
containing the lines $F_{ij}$, $1\le i\le j$. By (\ref{f400})
there are 16 pinch points on $\Sigma$.  Similarly as in Example
\ref {quartic}, each of the eight points $p_{ij;ih}$, $i,j,h=1,2$
(not lying on $\varrho$) is the limit of two pinch points of
$\Sigma$. We call them \emph{limit pinch points}.

 The smooth normalization $S$ of $\Sigma$ specializes to
a partial normalization
$S_0$ of $\Sigma_0$, ruled over the same nodal base curve $E_0$.
The singular surface $S_0$ consists of three irreducible quadric surfaces
$\tilde Q_0,  \tilde Q_1,\tilde Q_2$ glued together along
the common rulings $\tilde F_{ij}$
in the same way as before.

The limit $\tilde C_0=\Delta_{S_0}$ of $\tilde C=\Delta_S$
is a nodal curve of arithmetic genus $17$. It maps to $E_0$
with degree $4$, and
consists of ten components:
\begin{itemize}
\item  two  copies $\varrho_i\subseteq \tilde Q_i$ of $\varrho$, each is
mapped with degree  $2$ to $\Gamma_i$, $i=1,2$;
\item two copies  $G_{i;hk}\subseteq \tilde Q_i$ of $G_{hk}$,
with $h,k=1,2$ and $i\in \{0,h\}$, eight curves in total. The
curves $G_{0;hk}$ and $G_{h;hk}$ are glued at two points  $p_{h1;hk}$ and
$p_{h2;hk}$.  Each of them is also glued to $\varrho_h$ at two
points, $h=1,2$. Hence the curves $\varrho_1$ and $\varrho_2$ meet in
the eight points $p_{ij;3-i,h}$, with $i,j,h=1,2$.
The four disjoint curves $G_{0;hk}$ on $\tilde Q_0$
are all mapped isomorphically
to $\Gamma_0$, whereas for  $h=1,2$ the two disjoint curves
$G_{h;hk}$ on $\tilde Q_h$ ($k=1,2$) are mapped
isomorphically to $\Gamma_h$.
\end{itemize}
Therefore, the limit of the $24$ ramification points of the projection
$\pi: \tilde C\to E$ are:
\begin{itemize}
\item [(a)] the  ramification points of the degree $2$ covers
$\varrho_i\to \Gamma_i$, $i=1,2$, in total $8$ such points;
\item [(b)] the connecting nodes of $\varrho_h$ with $G_{h;hk}$,
$k,h=1,2$, in total $8$ distinct such points, each counted with
multiplicity two.
\end{itemize}
We call these the \emph{limit ramification points}.

Part (i) follows from this description, our generality assumption,
and the observation that
every limit ramification point of type (b) smooths to two
ramification points on $\tilde C$ lying on different rulings.

As for (ii), the ruling $F_{ij}$  through $p_{ij;ih}$ misses all
limit pinch points other than $p_{ij;i,3-h}$. Consider a partial
deformation of $\Sigma_0$ to the union of a general elliptic
quartic scroll $\Sigma_0'$ and a quadric $Q'_1$ containing two
general rulings. This corresponds to a partial smoothing of $E_0$
to the union of an elliptic quartic curve $E'$, obtained by
smoothing $\Gamma_0+\Gamma_2$,  plus a conic $\Gamma'_1$
(specializing to $\Gamma_1$) meeting $E'$ transversally at two
points. In this way $\Sigma'_0$ has two double lines $R_1,R_2$
which respectively specialize to $G_{21}$ and $G_{22}$.  For a
fixed index $i\in \{0,1\}$, the two limit pinch points
$p_{2j;2i}$, $j=1,2$, deform to four pinch points of $\Sigma'_0$
on $R_i$, and, as we saw in Example \ref {quartic}, they are
general points on $R_1,R_2$ and are never pairwise on a ruling.

For the proof of (iii) note that, by generality assumptions, the
rulings through the limit ramification points of type (a) do not
contain any of the limit pinch points. In contrast, the rulings
through limit ramification points of type (b) do contain limit
pinch points. However, the same proof as for (ii) and generality
assumptions imply that, in a general deformation of $\Sigma_0$ to
$\Sigma$, this is no longer the case.

The case $d=7$ is similar, hence we will be as  brief as possible.
The closure of $\mathcal H_{7,2}$ contains points corresponding to
a surface $\Sigma_0=\Sigma'\cup P$, where $\Sigma'_0$ is a general
sextic scroll of genus 2 and $P$ is a general plane containing a
general ruling $F$. The intersection of $P$ with $\Sigma'$
consists of $F$ plus a  plane quintic curve $D$ of
genus $2$, which has four nodes $p_i$, $i=1,\ldots,4$. The
intersection of $C'=\Delta_ {\Sigma'}$ with $P$ consists of the
points $p_i$, $i=1,\ldots, 4$, and four more points $q_i\in F$,
$i=1,\ldots, 4$. The intersection of $D$ with $F$ consists
of the points $q_i$, $i=1,\ldots, 4$, and of a further point $q$
which is smooth on $\Sigma'$, so that $P$ is tangent to $\Sigma'$
at $q$.

The surface $\Sigma_0$ is the limit of a general scroll $\Sigma$
of degree $7$ and genus $2$. If $E$ is the base of $\Sigma$
regarded as a curve in $\Gr(1,3)$, this corresponds to $E$
degenerating to $E_0$, which is the union of a general sextic $E'$
of genus $2$ and a line $L$ meeting $E'$ transversally at
one points $f$, which corresponds to $F$. The limit of the ruling
of $\Sigma$ is the ruling of $\Sigma'$ plus the pencil in $P$,
corresponding to $L$, with center a general point of $F$.

The limit of $C=\Delta_\Sigma$ is the curve $C_0=C'\cup D$
of degree $13$. The points $p_i$, $i=1,\ldots, 4$, are limits of
the four triple points of $C$.
 The geometric genus of a partial smoothing of $C_0$ at the points $q_i$,
 $i=1,\ldots, 4$, is $10$.
 All this agrees with \eqref {f200},  \eqref {f300},
and \eqref {f400}.

The usual analysis shows that the limit of the $18$ pinch points
of $\Sigma$ (see \eqref {f500}) are the $16$ pinch points of
$\Sigma'$ plus the point $q$ counted with multiplicity $2$.

 The limit $\tilde C_0$ of $\tilde C=\Delta_S$ maps with degree five
 to the curve $E_0=E'\cup L$. It consists of:
 \begin{itemize}
\item  a copy $\tilde C'$ of $\Delta_{S'}$ which maps to $E'$ with
degree four;
\item a copy of the normalization $\bar D$ of $D$, which
maps isomorphically to $E'$ and meets $\tilde C'$
transversally at four points;
\item a copy of $D$
which maps  to $L$ with multiplicity five
via the projection induced by the ruling on $P$, and meets
$\tilde C'$ transversally at four points.
\end{itemize}
Hence the limit of the $44$ ramification points of the projection
$\pi: \tilde C\to E$ are
\begin{itemize}
\item the $24$ ramification points of the map $\tilde C' \to E'$;
\item the $12$ ramification points of the map $D \to L$;
\item  the $4$ connecting nodes of  $\tilde C'$ with $\bar D$,
each counted with multiplicity two.
\end{itemize}

With this in mind the proof proceeds similarly to the case $d=6$.
The details can be left to the reader. \eproof

\bthm\label{mthm} For every $d\ge 6$ there exists a hyperbolic
surface  in $\PP^3$ of degree $d$. Consequently, a very general
surface in $\PP^3$ of degree $d\ge 6$ is algebraically hyperbolic.
\ethm

\bproof For $d=6,7$ this follows from Corollary \ref{mcor} and
Lemma \ref{100}. For $d\ge 8$ one can consider e.g. a general
deformation of the union of two general cones in $\PP^3$ of
degrees $d_1,d_2$, where $d_1+d_2=d$ and $d_i\ge 4$ (see \cite {SZ2}).
\eproof

\brem\label{sz} Consider the union $X_0=X_1\cup X_2$ of projective
cones with distinct vertices in $\PP^4$ over two smooth hyperbolic
surfaces in $\PP^3$. According to \cite{SZ2}, $X_0$ can be
deformed to a smooth hyperbolic threefold in $\PP^4$ of degree
$\deg (X_1)+\deg (X_2)$. Thus there exist hyperbolic threefolds in
$\PP^4$ of any given degree $d\ge 12$. Consequently, a very
general threefold in $\PP^4$ of degree $d\ge 12$ is algebraically
hyperbolic. \erem

\providecommand{\bysame}{\leavevmode\hboxto3em{\hrulefill}\thinspace}


\begin{thebibliography}{KaMi}

\bibitem {Alb} G.~Albanese. Sulle condizioni perch\'e una curva algebrica riducibile
si possa considerare come limite di una curva algebrica
irriducibile. Rend.\ del Circolo Mat.\ di Palermo, {\bf 52}
(1928), 105--150.

\bibitem {ac2}  E. ~Arbarello, M. ~Cornalba.
Su di una proprieta' notevole dei morfismi
di una curva a moduli generali in uno spazio proiettivo.
 Rend.\ Sem.\ Mat.\ Univ.\
Politec.\ Torino {\bf 38} (1980), 2, 87--99.

\bibitem {ac1}  E.~Arbarello, M. ~Cornalba.
Footnotes to a paper of Beniamino Segre.
The number of $g^{1}_{d}$'s on a general $d$--gonal curve, and the
unirationality of the Hurwitz spaces of $4$--gonal and $5$--gonal
curves. Math.\ Ann.\ {\bf 256} (1981), 3, 341--362.

\bibitem{APS} E.~Arrondo, M.~Pedreira. I.~Sols.\
On regular and stable ruled surfaces in $\PP^3$.\ Algebraic curves
and projective geometry (Trento, 1988), 1--15. With an appendix of
R.~Hernandez, 16--18, Lecture Notes in Math., {\bf 1389},
Springer-Verlag, Berlin, 1989.

\bibitem{BCGMB} M.C.~Beltrametti, E.~Carletti,
D.~Gallarati, G.~Monti Bragadin.
Lectures on curves, surfaces and projective varieties. A classical
view of algebraic geometry. EMS Textbooks in Mathematics. European
Mathematical Society (EMS), Z\"urich, 2009.

\bibitem {MBer} M.-A.~Bertin.\ On the singularities of
the trisecant surface to a space curve. Le Matematiche {\bf 53}
(1998), Supplemento, 15--22.

\bibitem{Bo} F.~ Bogomolov. Families of curves on a surface of general type.
Soviet Math.\ Dokl.\ {\bf 18} (1977), 1294--1297.

\bibitem {Bo} P.~ Bonnesen.\ Sur les s\'eries lin\'eaires triplement
infinies de courbes alg\'ebriques sur une surface alg\'ebrique.
Bull. Acad. Royal des Sciences et de lettres de Danemarque {\bf 4}
(1906).

\bibitem {Bro} R.~Brody. Compact manifolds and hyperbnolicity.
Trans.\ Amer.\ Math.\ Soc., {\bf 235} (1978), 213--219.

\bibitem{CCFMLincei} A.~Calabri, C.~Ciliberto, F.~Flamini, R.~Miranda.
Degenerations of scrolls to unions of planes, {Rend.\ Lincei Mat.\
Appl.}, {\bf 17} (2006), 95--123.

\bibitem {chen} Xi~ Chen. Rational curves on K3 surfaces.
J.\ Algebraic Geom., {\bf 8} (1999), 245--278.

\bibitem{CLR}
L.~ Chiantini, A.-F.~ Lopez, Z.~ Ran. Subvarieties of generic
hypersurfaces in any variety. Math.\ Proc.\ Cambr.\ Philos.\ Soc.\
{\bf 130} (2001), 259--268.

\bibitem {CZ} C.~Ciliberto, M.~Zaidenberg.\
3-fold symmetric products of curves as hyperbolic hypersurfaces in
${\PP}^4$. Intern.\ J.\ Math., {\bf 14} (2003), 413--436.

\bibitem{ClR} H.~Clemens, Z.~Ran.\ Twisted genus bounds
for subvarieties of generic hypersurfaces. Amer.\ J.\ Math.\ {\bf
126} (2004), 89--120; erratum {\em ibid.} 127 (2005), 241--242.

\bibitem{DEG} J.P. ~Demailly, J. ~El Goul.\
Hyperbolicity of generic surfaces of high degree in projective
3-space. Amer.\ J.\ Math.\ {\bf 122} (2000), 515--546.

\bibitem{Do} I.V.~Dolgachev.\
Topics in classical algebraic geometry, Part I. Available at the site:\\
http://www.math.lsa.umich.edu/~idolga/topics1.pdf

\bibitem{Du} J. ~Duval.\ Une sextique hyperbolique dans ${\PP}^3(\C)$. Math.\
Ann.\ {\bf 330} (2004), 473--476.

\bibitem{Ei1}
L. ~Ein.\ Subvarieties of generic complete intersections. Invent.\
Math.\ {\bf 94} (1988), 163--169.

\bibitem{Ei2}
L.~Ein.\ Subvarieties of generic complete intersections. II.
Math.\ Ann.\ {\bf 289}  (1991), 465--471.

\bibitem{En} F.~Enriques.\ Le Superficie Algebriche. Nicola Zanichelli,
Bologna, 1949.

\bibitem{Fr1} A. ~Franchetta. \  Sulla curva doppia della proiezione di una superficie
generale dell'$S_4$, da un punto generico su un $S_3$,  Reale
Accad. d'Italia, Rend. Cl. Sci. Fis., Mat, Nat., s. VII, {\bf 2}
(1940), 282--288
 (also in  A. Franchetta, ``Opere Scelte''
 (C.~Ciliberto and C.~Sbordone Eds.), Giannini, Napoli, 2006, 19--27).

\bibitem{Fr2} A. ~Franchetta. Sulla curva doppia della proiezione di una
superficie generale dell'$S_4$, da un punto generico su un $S_3$,
Rend. Accad. Lincei, s. VII, {\bf 2} (1947), 276--279 (also in  A.
Franchetta, ``Opere Scelte'', C. ~Ciliberto and C. ~Sbordone Eds.,
Giannini, Napoli, 2006, 79--82).

\bibitem{Fr} A. ~Franchetta. \ Sulla variet\`a doppia della proiezione generica
di una variet\`a algebrica non singolare, Accad.\ Sci.\ Lettere ed
Arti di Palermo,\  IV, {\bf 14} (1953--54), 5--12 (also in  A.
~Franchetta, ``Opere Scelte'', C. ~Ciliberto and C. ~Sbordone
Eds., Giannini, Napoli, 2006, 205--212).

\bibitem{Fu} W. ~Fulton.\ Intersection Theory.
Springer--Verlag, Berlin, 1998.

\bibitem {Hal} M. ~Halic. A remark about the rigidity
of curves on K3 surfaces.
Collect.\ Math., {\bf 61} (2010), 323--336.

\bibitem{Ha} G.M. ~Hana.\ Rational curves on a general heptic fourfold.
In: ``Curves and Codes from Projective Varieties'', Ph.D.\
thesis. University of Bergen, Norway (2006).

\bibitem {har} R. ~Hartshorne. \ Algebraic Geometry.
Springer--Verlag, Berlin, 1977.

\bibitem {Kn}
A.L.~Knutsen.\
Remarks on families of singular curves with hyperelliptic normalizations.
Indag.\ Math.\ (N.S.) {\bf 19} (2008), 217--238.

\bibitem {KK} Vik.S.~ Kulikov, P.F.~ Kurchanov.  Complex algebraic varieties:
periods of integrals and Hodge structures. Algebraic geometry,
III, 1?217, Encyclopaedia Math.\ Sci.\ {\bf 36} (1998), Springer,
Berlin.

\bibitem{LM} S.\ S.-Y.\ Lu, Y.\ Miyaoka.
Bounding curves in algebraic surfaces by genus and Chern numbers.
Math.\ Res.\ Lett.\ {\bf 2} (1995),  663--676.

\bibitem{MQ}
M.~McQuillan.\ Holomorphic curves on hyperplane sections of
3-folds. Geom.\ Funct.\ Anal.\ {\bf 9} (1999), 370--392.

\bibitem{MP} E. ~Mezzetti, D. ~Portelli. A tour through some classical
theorems on algebraic surfaces. An.\ Stiint.\ Univ.\ Ovidius
Constanta Ser.\ Mat.\ {\bf 5} (1997), 51--78.

\bibitem{Mo} B. ~Moishezon.\ Complex surfaces and connected sums of complex
projective planes. Lect.\ Notes in Mathem.\ 630, Springer-Verlag,
Berlin, 1977.

\bibitem{Morr} D.R. ~Morrison.\ The Clemens-Schmid exact sequence and
applications, in ``Topics in Trascendental Algebraic Geometry'',
{Ann.\ of Math.\ Studies}, {\bf 106} (1984), 101--119.

\bibitem {No}  A. ~Nobile. \ Genera of curves varying in a family.
Ann.\ Sci.\ \'Ecole Norm.\ Sup.\ (4) {\bf 20} (1987), 465--473.

\bibitem{Pac1} G.~Pacienza.\ Rational curves on
general projective hypersurfaces. J.\ Alg.\ Geom.\ {\bf 12}
(2003), 245--267.

\bibitem{Pac2} G.~Pacienza.\ Subvarieties of general type
on a general projective hypersurface. Trans.\ Amer.\ Math.\ Soc.\
{\bf 356} (2004), 2649--2661.

\bibitem{Pau}
M.~Paun.\ Vector fields on the total space of hypersurfaces in the
projective space and hyperbolicity. Math.\ Ann.\ {\bf 340} (2008),
875--892.

\bibitem{Pi} R.~Piene. Some formulas for a surface in ${\PP}^{3}$.
 Algebraic geometry (Proc.\ Sympos., Univ.\
 Tromso, Tromso, 1977), 196--235.
 Lecture Notes in Math., {\bf 687}, Springer, Berlin, 1978.

\bibitem{Se} E.~ Sernesi. Partial desingularizations of families
of nodal curves.
Appendix in: F.~ Flamini, A.L.~ Knutsen, G.~Pacienza. On families
of rational curves in the Hilbert square of a surface, 679--682.
Michigan Math.\ J.\ {\bf 58} (2009), 639--682.

\bibitem{Sev} F. Severi,  Intorno ai punti doppi impropri
di una superficie generale dello spazio a quattro dimensioni e ai
suoi punti tripli apparenti, Rend. Circ. Mat. Palermo {\bf 15}
(1901), 33--51.

\bibitem{SZ0} B. ~Shiffman, M. ~Zaidenberg.\
Two classes of hyperbolic surfaces in ${\PP}^3$. International J.\
Math.\ {\bf 11} (2000), 65--101.

 \bibitem{SZ1} B. ~Shiffman, M. ~Zaidenberg.\
Constructing low degree hyperbolic surfaces in ${\PP}^3$. Houston
J.\ Math.\ (the special issue for S.\ S.\ ~Chern) {\bf 28} (2002),
377--388.

\bibitem{SZ2} B. ~Shiffman, M. ~Zaidenberg.\
New examples of hyperbolic surfaces in ${\PP}^3$. Funct.\ Anal.\
Appl.\ {\bf 39} (2005), 76--79.

\bibitem{Sh} D.~Shin.\ Rational curves on general hypersurfaces of degree 7 in
$\Bbb P^5$. Osaka J.\ Math.\ {\bf 44} (2007),  1--10.

\bibitem{Vo1} C.~Voisin.\ On a
conjecture of Clemens on rational curves on hypersurfaces. J.\
Diff.\ Geom.\ {\bf 44} (1996), 200--214.

\bibitem{Vo2} C.~Voisin.
A correction on ``A conjecture of Clemens on rational curves on
hypersurfaces'', J.\ of Diff.\ Geom.\ {\bf 49} (1999), 601--611.

\bibitem{Wa2}
L.C.~Wang.\ A remark on divisors of Calabi-Yau hypersurfaces.
Asian J.\ Math.\ {\bf 4} (2000), 369--372.

\bibitem{Xu1} G.~Xu.\ Subvarieties of
general hypersurfaces in projective space. J.\ Differential Geom.\
{\bf 39(1)} (1994), 139--172.

\bibitem{Xu3} G.~Xu.\ Divisors on general
complete intersections in projective space. Trans.\ Amer.\ Math.\
Soc.\ {\bf 348} (1996), 2725--2736.

\bibitem{Za1} M.~Zaidenberg.\ Stability of hyperbolic
embeddedness and construction of examples. Math.\ USSR Sbornik
{\bf 63} (1989), 351--361.
%
\bibitem{Za2} M.~Zaidenberg.\
Hyperbolicity of general deformations. Funct.\ Anal.\ Appl.\ {\bf
43} (2009), 113--118.

\bibitem {Zak1} F. L. Zak, {Tangents and secants of algebraic
varieties}, Translations of Mathematical Monographs, {\bf 127},
Amer. Math. Soc. 1993.

\bibitem {za} O.~Zariski.\  Dimension--theoretic characterization
of maximal irreducible algebraic systems of plane nodal curves of
a given order $n$ and with a given number $d$ of nodes. Amer.\ J.\
Math.\ {\bf 104} (1982), 209--226.

\end{thebibliography}
\end{document}